\documentclass[11pt,francais]{smfart}

\usepackage[T1]{fontenc}
\usepackage[francais,english]{babel}
\usepackage[utf8]{inputenc}
\usepackage{textcomp,multicol,enumitem}

\usepackage{amssymb,url,amsmath}
\theoremstyle{plain}\newtheorem{theo}{Th\'eor\`eme}[section]
\theoremstyle{plain}\newtheorem{prop}{Proposition}[section]
\theoremstyle{plain}\newtheorem{lem}{Lemme}[section]
\theoremstyle{plain}\newtheorem{coro}{Corollaire}[section]
\theoremstyle{plain}\newtheorem{conj}{Conjecture}[section]
\theoremstyle{plain}
\theoremstyle{definition}\newtheorem{defi}{D\'efinition}[section]
\theoremstyle{definition}\newtheorem{remarque}{Remarque}[section]
\numberwithin{equation}{section}

%\makeatletter
%    \def\ps@copyright{\ps@empty
%    \def\@oddfoot{\hfil\small\copyright 1997, \SMF}}
%\makeatother

\newcommand{\BibTeX}{{\scshape Bib}\kern-.08em\TeX}
\newcommand{\T}{\S\kern .15em\relax }
\newcommand{\AMS}{$\mathcal{A}$\kern-.1667em\lower.5ex\hbox
	{$\mathcal{M}$}\kern-.125em$\mathcal{S}$}

\tolerance 400
\pretolerance 200

\title[algébricité modulo $p$ et structures de Frobenius fortes]{algébricité modulo $p$, s\'eries hyperg\'eom\'etriques  et structures de Frobenius fortes}

\date {}
\author{Daniel Vargas-Montoya}
\address{Institut Camille Jordan, Universit\'e Claude Bernard Lyon 1, Batîment Braconnier, 21 Avenue Claude Bernard, 69100 Villeurbanne\\
}
\email{vargas@math.univ-lyon1.fr}
%\urladdr{vargas@math.univ-lyon1.fr}

\keywords{Structure de Frobenius forte, r\'eduction modulo p, algébricité modulo $p$, équations différentielle $p$-adique, rigidité}
\thanks{This project has received funding from the European Research Council (ERC) under the 
	European Union's Horizon 2020 research and innovation programme 
	under the Grant Agreement No 648132. }

\begin{document}
%\textbf{Ceci est la version finale de l'article intitulé:} Algébricité modulo $p$, séries hypergéométriques et structures de Frobenius forte.  

%\textbf{date}: 17 mars 2021.
\begin{abstract}
	Ce travail est consacré à l'étude de l'algébricité modulo $p$ des $G$-fonctions de Siegel. 
	Notre but  est de souligner la pertinence de la notion de structure de Frobenius forte, classiquement \'etudi\'ee 
	dans la théorie des équations différentielles $p$-adiques, pour l'étude d'une conjecture d'Adamczewski et Delaygue 
	concernant le degré d'algéricité de réductions modulo $p$ de $G$-fonctions. 
	Nous rendons d'abord explicite un résultat de Christol en montrant que si $f(z)$ est une $G$-fonction qui annule un opérateur différentiel dans $\mathbb{Q}(z)[d/dz]$ d'ordre $n$ qui est muni d'une structure de Frobenius forte de période $h$ pour le nombre premier $p$ et que $f(z)$ est à coefficients dans $\mathbb{Z}_{(p)}$, alors la réduction de $f$ modulo $p$ est algébrique sur $\mathbb F_p(z)$ et son degré d'algébricité est majoré par $p^{n^2h}$. En généralisant une approche introduite par Salinier, nous montrons ensuite qu'un opérateur fuchsien à coefficients dans $\mathbb{Q}(z)$, dont le groupe de monodromie est rigide et dont les exposants sont rationnels,  possède, pour presque tout nombre premier $p$, 
	une structure de Frobenius forte  de période $h$, où $h$ est majorée explicitement et 
	indépendamment  de $p$. Une version légèrement 
	différente de ce résultat a été démontré récemment par Crew en suivant une approche différente fondée sur la cohomologie $p$-adique. 
	Nous utilisons ces deux résultats pour résoudre la conjecture mentionnée  dans le cas des séries hypergéométriques généralisées. 
\end{abstract}

\maketitle

\tableofcontents
\section{Introduction}

Etant donn\'es un corps $K$ et une s\'erie formelle de plusieurs variables 
$g(z_1,\ldots,z_n)=\sum_{(i_1,\ldots,i_n)\in\mathbb{N}^n}a(i_{1},\ldots,i_{n})z_{1}^{i_{1}}\cdots z_{n}^{i_{n}}\in K[[z_1,\ldots,z_n]]$, on d\'efinit la \emph{diagonale} de $g$ comme la s\'erie formelle d'une variable
$$
f(z)=\sum_{j\geq 0}a(j,j,\ldots,j)z^j \in K[[z]].
$$ 
Lorsque $K$ est de caract\'eristique nulle, cette op\'eration est transcendante, dans le sens o\`u la diagonale d'une 
s\'erie formelle alg\'ebrique (i.e., alg\'ebrique sur le corps des fractions rationnelles $K(z_1,\ldots,z_n)$) est 
g\'en\'eralement transcendante sur le corps $K(z)$. Un exemple tr\`es simple, voir \cite{Bordiagonal},  
est donn\'e par la diagonale de 
la fraction rationnelle $\frac{4}{(2-z_1-z_2)(2-z_3-z_4)}$ qui est \'egale \`a   
$$
f(z)= \sum_{n\geq 0} \frac{1}{2^{4n}}{2n \choose n}^2z^n\in\mathbb Q[[z]].
$$
En revanche, lorsque $K$ est un corps de caract\'eristique non nulle, Furstenberg \cite{furstenberg} a montr\'e que la diagonale 
d'une s\'erie formelle rationnelle est toujours alg\'ebrique. Deligne \cite{deligne} 
a ensuite \'etendu ce r\'esultat au cas des 
diagonales de s\'eries formelles alg\'ebriques. Il souligne \'egalement  la cons\'equence 
remarquable suivante : 
si $f(z)=\sum_{n\geq 0}a(n)z^n\in \mathbb Z[[z]]$ est la diagonale d'une s\'erie formelle alg\'ebrique, 
alors pour tout nombre premier $p$, la r\'eduction modulo $p$ de $f$, c'est-\`a-dire la s\'erie formelle
$$
f_{\vert p}(z)=\sum_{n\geq 0}(a(n) \bmod p)z^n\in \mathbb F_p[[z]]\,,
$$
est alg\'ebrique sur $\mathbb F_p(z)$. 
 Un probl\`eme naturel consiste alors 
\`a \'etudier la fa\c con dont le degr\'e d'alg\'ebricit\'e de $f_{\vert p}$ varie avec $p$. 
Deligne sugg\`ere dans \cite{deligne} qu'il existe une constante $c$ ind\'ependante de $p$ 
telle que $\deg(f_{\vert p})< p^{c}$. 
Il prouve \'egalement que c'est bien le cas pour les diagonales de fonctions alg\'ebriques de deux variables. 
Le cas g\'en\'eral n'est trait\'e que plus r\'ecemment par  Adamczewski et Bell dans \cite{Bordiagonal}. Ces auteurs  montrent \'egalement, qu'on ne peut, en g\'en\'eral, esp\'erer mieux qu'une majoration polynomiale en $p$.  Lorsque $K=\overline{\mathbb Q}$,  les diagonales de s\'eries formelles alg\'ebriques forment 
une sous-classe de celle des $G$-fonctions introduite par Siegel  \cite{Siegel} en 1929.  
Rappelons que $f(z)=\sum_{n\geq0}a_nz^{n}$ est une $G$ fonction si les $a_n$ sont des nombres algébriques 
et s'il existe un nombre r\'eel $C>0$ tel que :
\begin{enumerate}[resume]
	\item la fonction $f$ annule un opérateur différentiel $L$ à coefficients dans $\overline{\mathbb{Q}}(z)$ ; 
	\item  la valeur absolue de chaque conjugué de Galois de $a_n$ est inférieure \`a $C^{n+1}$ pour tout $n\geq 0$  ;
	\item il existe une suite $D_m$ d'entiers strictement positifs tels que $D_m<C^m$ et $D_ma_n$ est un entier  algébrique pour tout $n\leq m$.
\end{enumerate}
Cette d\'efiniton implique qu'il existe un corps de nombres $K$ tel que $f(z)\in K[[z]]$. Consid\'erons un tel $K$. 
Soient $\vartheta_K$ l'anneau des entiers de $K$ et $\mathfrak{p}$ un idéal premier de $\vartheta_K$ tel que les coefficients de $f$ appartiennent à $\vartheta_{K,\mathfrak{p}}$, la localisation de $\vartheta_K$ en $\mathfrak{p}$. Notons ${\boldsymbol k}_{\mathfrak{p}}$ le corps résiduel de $\vartheta_{K,\mathfrak{p}}$, c'est-à-dire  ${\boldsymbol k}_{\mathfrak{p}}=\vartheta_{K,\mathfrak{p}}/\mathfrak{p}\vartheta_{K,\mathfrak{p}}=\vartheta_{K}/\mathfrak{p}$. 
On peut alors réduire $f$ modulo $\mathfrak{p}$ et  poser  $$f_{\mid\mathfrak{p}}(z)=\sum_{n\geq0}(a_n\bmod\mathfrak{p})z^n\in {\boldsymbol k}_{\mathfrak{p}}[[z]]$$ 
et  formuler la conjecture suivante \cite{ADconj}. 

\begin{conj}[Adamczewski--Delaygue]\label{1}
	Soient $K$ un corps de nombres et $f(z)\in K[[z]]$ une $G$-fonction. Supposons que l'ensemble 
	$\mathcal{S}$ des idéaux premiers $\mathfrak{p}$ de $\vartheta_K$ tel que $f\in\vartheta_{K,\mathfrak{p}}[[z]]$ soit infini.  Alors, on a : 
	\begin{enumerate}
		\item[{\rm (i)}] $f_{\mid\mathfrak{p}}$ est algébrique sur ${\boldsymbol k}_{\mathfrak{p}}(z)$ pour presque tout 
		$\mathfrak{p}\in\mathcal{S}$ ; 
		
		\item[{\rm (ii)}]  il existe $c>0$ tel que, pour tout $\mathfrak{p}$ v\'erifiant ${\rm (i)}$, 
		$\deg(f_{\mid\mathfrak{p}})<p^c$, o\`u $p$ d\'esigne la caract\'eristique du corps ${\boldsymbol k}_{\mathfrak{p}}$.  
	\end{enumerate}
\end{conj}

Les résultats de Deligne \cite{deligne} et d'Adamczewski et Bell \cite{Bordiagonal} mentionn\'es pr\'ec\'edemment 
montrent que cette conjecture est v\'erifi\'ee pour les diagonales 
de s\'eries alg\'ebriques et l'article \cite{Borisgfonctpu} fournit \'egalement d'autres familles d'exemples parmi les s\'eries 
hyperg\'eom\'etriques g\'en\'eralis\'ees ou les sommes multiples de produits de coefficients binomiaux.  
C'est en paticulier le cas de la série hypergéométrique $f_1(z)=_2F_1(1/2,1/2 ; 2/3,z)$ 
qui n'est pas la diagonale d'une s\'erie 
formelle alg\'ebrique car elle n'est pas globalement bornée. Par contre, d'après la proposition 8.5 de \cite{Borisgfonctpu}, en considérant l'ensemble $\mathcal{S}=\{p\in\mathcal{P} : p\equiv1\mod 3\}$,  
on obtient que pour tout $p\in\mathcal{S}$, $f_1$ peut se r\'eduire modulo $p$ et 
$f_{1\mid p}(z)=A_p(z)f_{1\mid p}(z)^p$ où $A_p(z)\in\mathbb{F}_p(z)$. 
Ainsi, $f_{1\mid p}(z)$ est algébrique sur $\mathbb{F}_p(z)$ et $\deg(f_{1\mid p})<p$.

Notons que la méthode utilisée dans \cite{Bordiagonal} est sp\'ecifique aux diagonales de s\'eries formelles alg\'ebriques, tandis que  les résultats de \cite{Borisgfonctpu} se fondent sur une analyse minutieuse de la 
valuation $p$-adique des coefficients et  ne concernent pas toutes les s\'eries hyperg\'eom\'etriques g\'en\'eralis\'ees.  
Dans cet article, notre objectif est justement de prouver une version explicite de la conjecture~\ref{1} 
pour les séries hypergéométriques g\'en\'eralis\'ees \`a param\`etres rationnels et de sortir ainsi du 
cadre des $G$-fonctions globalement born\'ees. 
Il s'agit du th\'eor\`eme~\ref{alghyp} ci-dessous. 
Rappelons que ces séries sont de la forme 
$$_{n}F_{n-1}(\underline{\alpha},\underline{\beta},z)=\sum_{j\geq0}\frac{(\alpha_1)_j\cdots(\alpha_n)_j}{(\beta_{1})_j\cdots(\beta_{n-1})_jj!}z^j$$ 
où
$\underline{\alpha}=(\alpha_{1},\ldots,\alpha_{n}), 
\underline{\beta}=(\beta_{1},\ldots,\beta_{n-1},1)\in(\mathbb{Q}\setminus\mathbb{Z}_{\leq0})^n$ et, pour $x\in\mathbb{R}$, 
$(x)_n=x(x+1)\cdots(x+n-1)$ et $(x)_0=1$. Nous désignons par $\mathbb{Z}_{(p)}$ la localisation de l'anneau $\mathbb{Z}$ en l'idéal $(p)$. Le corps résiduel de $\mathbb{Z}_{(p)}$ est alors $\mathbb{F}_p$.

\begin{theo}\label{alghyp}
	Soient $\alpha_{1},\ldots,\alpha_{n},\beta_{1},\ldots,\beta_{n-1},\beta_n=1\in\mathbb{Q}\setminus\mathbb{Z}_{\leq0}$ tels que 
	pour tout $i,j$, $\alpha_{i}-\beta_{j}\notin\mathbb{Z}$. Soit $d_{\alpha,\beta}$ le plus petit commun multiple 
	des d\'enominateurs des $\alpha_{1},\ldots,\alpha_{n}$, $\beta_{1},\ldots,\beta_{n}$ et soit $\mathcal{S}$ l'ensemble des nombres premiers $p$ tels que $p$ ne divise pas $d_{\alpha,\beta}$ et $_{n}F_{n-1}(\underline{\alpha},\underline{\beta},z)\in\mathbb{Z}_{(p)}[[z]]$.	Alors, pour tout $p\in\mathcal{S}$, la r\'eduction modulo $p$ de  $_{n}F_{n-1}(\underline{\alpha},\underline{\beta},z)$ est alg\'ebrique sur $\mathbb F_p(z)$ de degr\'e major\'e par  $p^{n^{2}\phi(d_{\alpha,\beta})}$, o\`u $\phi$ d\'esigne l'indicatrice d'Euler.
\end{theo}

%\begin{remarque}\label{modulop^m}
%	Comme cela a \'et\'e remarqu\'e dans \cite{Bordiagonal}, nous rappelons que si $f\in\mathbb{Z}_{(p)}[[z]]$ et $P$ est un polynôme non nul à coefficients dans $\mathbb{Z}_{(p)}$ tel que $P(f)\equiv0\mod p\mathbb{Z}_{(p)}$, alors on obtient \'egalement  la relation non trivial suivante $P^{m}(f)\equiv0\mod p^m\mathbb{Z}_{(p)}$ pour tout entier $m\geq1$. 
%	Ainsi, si $f_{\mid_p}$ est une série algébrique de degré majoré par $d$ alors $f_{\mid_{p^m}}$ est 
%	solution d'une \'equation polyn\^omiale  de degré majoré par $md$. 
%\end{remarque} 

La notion de \emph{structure de Frobenius forte}, introduite par Dwork \cite{DworksFf}, 
est classiquement utilis\'ee dans la th\'eorie des \'equations diff\'erentielles $p$-adiques. 
Dans cette article, nous montrons comment 
cette notion permet d'établir une stratégie générale pour attaquer la conjecture~\ref{1} (voir section~\ref{sff}). 
En particulier, la preuve du théorème~\ref{alghyp} repose sur la notion de structure de Frobenius forte. 
L'id\'ee de la d\'emonstration est la suivante. Lorsque les param\`etres $\alpha_i$ et $\beta_j$ v\'erifient les hypoth\`eses  du théorème~\ref{alghyp},  on sait que les \'equations hyperg\'eom\'etriques correspondantes ont des groupes de monodromie rigides, que leurs singularit\'es sont r\'eguli\`eres et que leurs exposants sont rationnels. 
Ces trois propri\'et\'es peuvent \^etre utilis\'ees pour d\'emontrer l'existence d'une structure de Frobenius forte 
pour presque tout $p$ dont la p\'eriode est ind\'ependante du nombre premier $p$. Dans le cas des \'equations hyperg\'eom\'etriques de Gauss, cette strat\'egie a \'et\'e mise 
en \oe uvre par  Salinier \cite{Salinier}. Notons qu'avant le travail de Salinier, Dwork   avait d\'ej\`a montr\'e par une approche diff\'erente que, sous ces hypoth\`eses,  l'op\'erateur hyperg\'eom\'etrique de Gauss est muni d'une structure de Frobenius forte pour presque tout $p$ 
(voir \cite[Chap. 7, 7.2.2]{Dworklectures}). 
Le théorème~\ref{rig} montre de fa\c con plus g\'en\'erale que les syst\`emes diff\'erentiels rigides sont munis 
d'une structure de Frobenius forte 
pour presque tout $p$ dont la p\'eriode peut \^etre major\'ee explicitement et ind\'ependamment du nombre premier $p$. 
Notre d\'emonstration de ce r\'esultat g\'en\'eralise l'approche de Salinier au cas des syst\`emes diff\'erentiels rigides ou, de fa\c con \'equivalente, au cas des \'equations diff\'erentielles sans param\`etre accessoire (cf.\ \cite{localsystems}). 
Notons par ailleurs que Crew \cite{crew} a \'egalement obtenu r\'ecemment un r\'esultat similaire en suivant une approche diff\'erente fond\'ee sur la cohomologie $p$-adique. Nous pr\'ecisons que nous n'avons pris connaissance de l'article \cite{crew} 
qu'apr\`es avoir d\'emontr\'e le th\'eor\`eme~\ref{rig}. 
Pour davantage de pr\'ecisions, notamment sur le lien entre le théorème~\ref{rig} et les r\'esultats de Katz \cite{localsystems}, Crew \cite{crew}, et Esnault et Groechenig \cite{esnault}, 
nous renvoyons le lecteur \`a la discussion pr\'ec\'edant et suivant  le th\'eor\`eme~\ref{rig}.  
D'autre part, notre théorème~\ref{algebrique}  établit un lien entre l'existence, pour un nombre premier $p$, d'une 
structure de Frobenius forte de p\'eriode $h$ pour un opérateur diff\'erentiel d'ordre $n$ et le degr\'e d'algébricité modulo $p$ des solutions (s\'eries formelles) de cet opérateur en fonction de $p$, $n$ et $h$. Il rend explicite des arguments donn\'es par Christol dans \cite{Gillesalgebriques}. 
En utilisant la rigidit\'e des \'equations hyperg\'eom\'etriques g\'en\'eralis\'ees et en combinant les th\'eor\`emes~\ref{algebrique} et \ref{rig}, on obtient finalement le th\'eor\`eme \ref{alghyp}.  Au vu de la conjecture~\ref{1} et du th\'eor\`eme~\ref{algebrique}, nous insistons sur le fait que, dans le th\'eor\`eme~\ref{rig}, il est essentiel que la p\'eriode
des structures de Frobenius fortes puisse \^etre major\'ee ind\'ependamment de $p$.

Cet article est organis\'e de la fa\c con suivante. Dans la section~\ref{sff}, nous rappelons la notion de structure 
de Frobenius forte et nous 
énon\c cons le théorème~\ref{algebrique}. Dans la section~\ref{rigidite}, nous  rappelons la notion d'opérateur 
diff\'erentiel rigide et énonçons  le théorème~\ref{rig}.  Les théorèmes~\ref{algebrique} et  \ref{rig} sont respectivement d\'emontr\'es dans les sections~\ref{algebricite} et \ref{sFfrig}. Enfin, dans la 
section~\ref{operatuerhyp}, nous  prouvons le théorème~\ref{alghyp} et l'illustrons \`a l'aide 
de quelques exemples.

%%%%%%%%%%%%%%%%%%%%%%%%%%%%%
\section{Structure de Frobenius forte et algébricité modulo $p$.}\label{sff}

Dans cette partie, nous rappelons la d\'efinition du corps des \'el\'ements analytiques $E_p$ et celle de 
structure de Frobenius forte d'un op\'erateur diff\'erentiel, puis nous \'enon\c cons le théorème~\ref{algebrique}.  
\'Etant donn\'e un nombre premier $p$, nous noterons $\mathbb{Z}_p$ l'anneau des entiers $p$-adiques, $\mathbb{Q}_{p}$ 
le corps des nombres $p$-adiques et $\mathbb{C}_{p}$ le compl\'et\'e de la cl\^oture alg\'ebrique de $\mathbb{Q}_{p}$. Nous rappelons que la valuation de $\mathbb{Z}_{p}$ s'\'etend de mani\`ere unique \`a $\mathbb{C}_{p}$. Nous d\'esignerons par $\pi_{p}$, 
un \'el\'ement de $\mathbb{C}_{p}$ v\'erifiant $\pi_{p}^{p-1}=-p$. 

\subsection{\'El\'ements analytiques}
Soit $K$ un corps ultram\'etrique de caract\'eristique nulle muni de la valuation $\vert \cdot \vert$ et 
$k$ son corps r\'esiduel de caract\'eristique $p$. Pour $\vert x\vert\leq1$ nous notons $\bar{x}$ l'\'el\'ement de $k$ qui repr\'esente la classe r\'esiduelle de $x$. Nous supposons que $K$ est complet pour la norme $\vert \cdot \vert$. 
Nous dirons que $\sigma:K\rightarrow K$ est un automorphisme de Frobenius si les conditions suivantes sont v\'erifi\'ees :
\begin{enumerate}
	\item pour tout $x\in K$, $|\sigma(x)|=|x|$ ;
	\item pour tout $x\in K$, $|x|\leq 1$, $|\sigma(x)-x^{p}|<1$. Autrement dit, $\overline{\sigma}:k\rightarrow k$ donn\'e par $\overline{\sigma}(\overline{x})=\overline{\sigma(x)}$ est l'endomorphisme de Frobenius. Remarquons que $\overline{\sigma}$ est bien d\'efini par le point 1.
\end{enumerate} 

\begin{remarque}\label{frobcp}
	La proposition 1.10.1 de \cite{Gillesmoduldiff} montre que le corps $\mathbb{C}_{p}$ poss\`ede un automorphisme de Frobenius, 
	mais celui-ci  n'est pas unique. Dans la suite, nous fixons un tel automorphisme que nous notons $Frob:\mathbb{C}_{p}\rightarrow\mathbb{C}_{p}$ et que nous appellerons l'automorphisme de Frobenius de $\mathbb{C}_{p}$.
\end{remarque}

Nous montrons \`a pr\'esent comment construire le corps des \'el\'ements analytiques.  
Nous d\'esignons par $\mathcal{W}$ l'anneau d'Amice qui est 
l'ensemble des s\'eries formelles \[f(z)=\sum_{n\in\mathbb{Z}}a_nz^{n}\] 
telles que les $a_{n}$ sont des \'el\'ements de $K$ dont la valeur absolue est born\'ee et  tend vers z\'ero lorsque $n$ 
tend n\'egativement vers l'\'infini. 
D'apr\`es la proposition 1.1 de \cite{Gillesalgebriques}, l'anneau $\mathcal{W}$ est un $K$-espace vectoriel complet pour la norme d\'efinie par $|f|=sup\{|a_{n}| : n\in\mathbb Z\}$. 
L'anneau $K[z]$ est contenu dans l'anneau $\mathcal{W}$ et ainsi, l'anneau $K[z]$ est muni de la norme de Gauss 
\[\left|\sum a_{j}z^{j}\right|_{\mathcal G}={sup |a_{j}|}.\] 
De plus, d'apr\`es la proposition 1.2 de \cite{Gillesalgebriques}, tout \'el\'ement non nul de $K[z]$ est inversible dans $\mathcal{W}$, ce qui implique que 
l'anneau des fractions rationnelles $K(z)$ est contenu dans $\mathcal{W}$. La norme de $\mathcal{W}$ 
induit une norme sur $K(z)$ qui s'exprime comme  
\[\left|\frac{\sum a_{j}z^{j}}{\sum b_{i}z^{i}}\right|_{\mathcal G}=\frac{sup |a_{j}|}{sup |b_{i}|}.\] 
Comme $\mathcal{W}$ est complet, le compl\'et\'e de $K(z)$ pour la norme de Gauss est \'egalement contenu dans 
$\mathcal{W}$. 

\begin{defi}[\'El\'ements analytiques]  
	Le corps des \'el\'ements analytiques $E_K$ est le compl\'et\'e du corps $K(z)$ pour la norme de Gauss dans 
	$\mathcal{W}$. Dans le cas o\`u $K=\mathbb{C}_{p}$, le corps des \'el\'ements analytiques est not\'e $E_{p}$ 
	et l'analogue de $\mathcal{W}$ est not\'e $\mathcal{W}_{p}$.
\end{defi}

\begin{remarque}
	Pour tout $f\in K(z)$, on a \[\left|\frac{d}{dz}(f)\right|_{\mathcal G}\leq|f|_{\mathcal G}.\] Ainsi, la d\'eriviation $\frac{d}{dz}:K(z)\rightarrow K(z)$ est une fonction continue pour la norme de Gauss et elle s'\'etend naturellement au corps $E_K$ des \'el\'ements analytiques. On note encore son extension $\frac{d}{dz}:E_K\rightarrow E_K$.
\end{remarque}

\begin{defi}[$E_p$-\'equivalence]
	Soient $A$ et $B$ dans ${\mathcal M}_n(E_{p})$. Nous disons que $A$ et $B$ sont $E_{p}$-\'equivalentes s'il existe $H\in {\rm GL}_n(E_{p})$ telle que \[\frac{d}{dz}H=AH-HB.\]
\end{defi}

%%%%%%%%%%%%%%%%%%%%
\subsection{Structure de Frobenius forte}

Nous d\'efinissons l'application $Frob_{z^{p}}:\mathbb{C}_{p}(z)\rightarrow E_{p}$ par $$
Frob_{z^p}\left(\frac{\sum a_{i}z^i}{\sum b_{j}z^j}\right)=\frac{\sum Frob(a_{i})z^{pi}}{\sum Frob(b_{j})z^{pj}}.$$ Cette application est une isom\'etrie. Il s'agit donc d'une application continue qui s'\'etend au corps des \'el\'ements analytiques $E_{p}$. Nous la notons encore $Frob_{z^p}:E_{p}\rightarrow E_{p}$. C'est \`a nouveau une isom\'etrie et, pour tout $e\in E_{p}$, on a \begin{equation}\label{derive}
\frac{d}{dz}(Frob_{z^p}(e))=pz^{p-1}\left(Frob_{z^p}(\frac{d}{dz}e)\right).
\end{equation}
Soit $A$ une matrice de taille $n$ \`a coefficients dans $E_{p}$, nous consid\'erons la matrice $F_{z^p}(A):=\frac{d}{dz}(z^p)A^{Frob_{z^p}}=pz^{p-1}A^{Frob_{z^p}}$, o\`u $A^{Frob_{z^p}}$ est la matrice obtenue apr\`es avoir appliqu\'e $Frob_{z^p}$ \`a chaque entr\'ee de $A$.

Étant donné un opérateur différentiel d'ordre $n$
\begin{equation}\label{L}
L:=a_0(z)\frac{d}{dz^n}+a_{1}(z)\frac{d}{dz^{n-1}}+\cdots+a_{n-1}(z)\frac{d}{dz}+a_n(z)\in\mathbb{Q}[z][d/dz]\,,
\end{equation}
on d\'efinit la \emph{matrice compagnon} associ\'ee \`a $L$ par	\[
A=\begin{pmatrix}
0 & 1 & 0 & \dots & 0 & 0\\
0 & 0 & 1 & \dots & 0 & 0\\
\vdots & \vdots & \vdots & \vdots & \vdots & \vdots \\
0 & 0 & 0 & \ldots & 0 & 1\\
\frac{-a_{n}(z)}{a_0(z)} & \frac{-a_{n-1}(z)}{a_0(z)} & \frac{-a_{n-2}(z)}{a_0(z)} & \ldots & \frac{-a_{2}(z)}{a_0(z)} & \frac{-a_{1}(z)}{a_0(z)}\\
\end{pmatrix}.
\] 

\begin{defi}[Structure de Frobenius forte]
	Soit  $L\in\mathbb{Q}(z)[d/dz]$ un op\'erateur diff\'erentiel et soit $A$ sa matrice compagnon. Nous disons que $L$ a une structure de Frobenius forte de période h pour un nombre premier $p$  s'il existe un entier strictement positif  $h$ tel que la matrice $A$ et la matrice obtenue en appliquant $h$-fois $F_{z^p}$ \`a $A$ sont $E_{p}$-\'equivalentes. Comme $A\in {\mathcal M}_{n}(\mathbb{Q}(z))$, cela revient \`a dire qu'il existe $H\in {\rm GL}_{n}(E_{p})$ telle que 
	\begin{equation}\label{isofrob1}
	\frac{d}{dz} H=AH-p^{h}z^{p^{h}-1}HA(z^{p^{h}}).
	\end{equation}
	Le plus petit entier $h\geq 1$  ayant cette propriété est appel\'e \emph{la p\'eriode} de la structure de Frobenius associ\'ee au couple $(L,p)$.
\end{defi}

Par exemple, d'après \cite[Chap. 22, Theorem 22.2.1]{Kedlaya}\footnote{Voir aussi la discussion p. 351 de \cite{Kedlaya} et les r\'ef\'erences associ\'ees.} ou \cite[Chap. V, p. 111 ]{andre}, les \emph{équations de Picard-Fuchs} sont munies d'une structure de Frobenius forte pour presque tout nombre premier $p$. 
Le résultat qui suit montre le lien entre l'existence d'une structure de Frobenius forte pour $p$ de l'opérateur différentiel $L$ et l'algébricité modulo $p$ de ses solutions.

\begin{theo}\label{algebrique}
	Soit $L\in\mathbb{Q}(z)[d/dz]$ d'ordre $n$. 
	Soit $p$ un nombre premier pour lequel l'op\'erateur diff\'erentiel $L$ a une structure de Frobenius forte de p\'eriode $h$ et  $f(z)=\sum_{n\geq0}a(n)z^{n}\in\mathbb{Z}_{(p)}[[z]]$ une solution de $L$. 
	Soit $f_{\mid p}$ la r\'eduction de $f$ modulo $p\mathbb{Z}_{(p)}$. Alors la s\'erie formelle $f_{\mid p}$ est une s\'erie alg\'ebrique sur 
	$\mathbb{F}_{p}(z)$ de  degr\'e major\'e par $p^{n^{2}h}$.
\end{theo}

Notons que nous pourrions aussi énoncer ce théorème dans le cas d'un op\'erateur \`a coefficients dans 
$\overline{\mathbb Q}[z]$, la démonstration s'obtiendrait de la m\^eme fa\c con. 
Le théorème~\ref{algebrique} nous dit que pour montrer le point 1 de la conjecture~\ref{1} il suffit de voir que $f$ annule un opérateur muni d'une structure de Frobenius forte pour presque tout $p$ dans $\mathcal S$.
%En admettant la conjecture de Bombiere-Dwork le théorème \ref{algebrique} nous donne une réponse positive au point 1 de la conjecture \ref{1}. Ce qui concerne à la majoration du degré notons que par définition $h$ dépende de $L$ et de $p$.

\begin{remarque}\label{rem:gfonction}
	Si $L\in\mathbb{Q}[z][d/dz]$ est un op\'erateur diff\'erentiel muni d'une structure de Frobenius forte pour presque tout $p$ et $f\in\mathbb{Q}[[z]]$ est une solution de $L$, alors  $f$ est une $G$ fonction. En effet, l'hypoth\`ese faite sur $L$ 
	implique que le rayon de convergence au point générique est égal à 1 
	pour presque tout $p$. Cela d\'ecoule des 
	propositions 4.1.2, 4.6.4 et 4.7.2 de \cite{Gillesmoduldiff}. D'après la proposition 5.1 et le théorème 6.1 de \cite[Chap. III]{Dworkgfunciones}, on obtient alors que les singularit\'es de $L$ sont r\'eguli\`eres \`a exposants rationnels. Enfin, en combinant cette propri\'et\'e avec le théorème 4.2 \cite[Chap. VII]{Dworkgfunciones} et la proposition 1.1 de \cite[Chap. VIII]{Dworkgfunciones}, on obtient que $f$ est une $G$ fonction.
\end{remarque}

%%%%%%%%%%%%%%%%%%%%%%%%%%%%%%%
%%%%%%%%%%%%%%%%%%%%%%%%%%%%%%%
\section{Structure de Frobenius forte et rigidité}\label{rigidite}

Notre troisième résultat, le théorème~\ref{rig}, concerne  les opérateurs différentiels à coefficients dans $\mathbb{Q}[z]$ dont le groupe de monodromie est rigide.
 Nous commençons par rappeler la notion de système différentiel rigide. Étant donné le système différentiel \begin{equation}\label{A}
	\frac{d}{dz}y=Ay,\quad A\in {\mathcal M}_{n}(\mathbb{C}(z)),
	\end{equation}
nous disons que $\gamma\in\mathbb{C}$ est un point \emph{singulier} du système différentiel~\eqref{A} si $\gamma$ est un pôle de $A$. L'infini est un point singulier de~\eqref{A} si zéro est un point singulier du système différentiel obtenu après avoir appliqué le changement de variable $z\mapsto1/z$ au système~\eqref{A}. Nous disons que $\gamma\in\mathbb{C}$ est un point \emph{singulier régulier} du système différentiel~\eqref{A} si $\gamma$ est un point singulier et s'il existe une matrice $A_{\gamma}\in {\mathcal M}_{n}(\mathbb{C}(z))$ telle que la matrice $(z-\gamma)A_{\gamma}$ n'a pas de pôle en $\gamma$ et qu'il existe une matrice $P\in {\rm GL}_n(\mathbb{C}(\{z-\gamma\}))$ telle que $\frac{d}{dz}P=AP-PA_{\gamma}$, où $\mathbb{C}(\{z-\gamma\})$ est le corps des séries de Laurent convergentes au voisinage de $\gamma$. L'infini est un point singulier régulier de~\eqref{A} si zéro est un point singulier régulier du système différentiel obtenu après avoir appliqué le changement de variable $z\mapsto1/z$ au système~\eqref{A}. Le système différentiel~\eqref{A} est \emph{fuchsien} si tous ses points singuliers sont singuliers réguliers. 
 Soit $x$ un point non singulier du système différentiel~\eqref{A}, d'apr\'es la th\'eorie classique de Cauchy $$Sol(A)_{x}=\{y\in\mathbb{C}(\{z-x\})^n \text{ et } \frac{d}{dz}y=Ay\}$$
est un $\mathbb{C}$-espace vectoriel de dimension $n$. 
Soient $F=\{y_{1},\dots, y_{n}\}$ une base de $Sol(A)_x$, $S$ l'ensemble de points singuliers de~\eqref{A} et $\gamma\in S$. Les coordonnées des vecteurs $y_{1},\ldots, y_{n}$ peuvent \^etre prolong\'ees analytiquement le long du lacet $\tau$, où $[\tau]\in\Pi_{1}(\mathbb{C}\setminus S,x)$ et $\tau$ est un lacet autour de $\gamma$ tel que le groupe engendré par $[\tau]$ est $\mathbb{Z}$. Ainsi, on obtient un nouvel ensemble $\widetilde{F}$ qui sera encore une base de $Sol(A)_{x}$. Alors la matrice de \emph{monodromie locale en $\gamma$}, not\'ee $M(A,\gamma)\in {\rm GL}_n(\mathbb{C})$, est la matrice de changement de base de $F$ vers $\widetilde{F}$. Le théorème de monodromie complexe nous garanti que si $[\tau]=[\alpha]$,  on obtient  encore l'ensemble $\widetilde{F}$ en prolongeant les coordonnées des vecteurs $y_1,\ldots, y_n$ le long du  lacet $\alpha$. Écrivons $S=\{\gamma_{1},\ldots,\gamma_{r}\}\subset\mathbb{C}\cup\{\infty\}$. Le \emph{groupe de monodromie} de~\eqref{A} est le groupe engendr\'e par les matrices $M(A,\gamma_{1}),\ldots, M(A,\gamma_{r})$ qui satisfont \`a la relation $M(A,\gamma_{1})\cdots M(A,\gamma_{r})=Id$. D'après la construction, le groupe de monodromie dépend de la base $F$. Si nous prenons une autre base $F_1$ de $Sol(A)_x$ alors ces deux groupes de monodromie sont conjugués. Ainsi le groupe de monodromie est unique à conjugaison près.

\medskip

Soit \begin{equation}\label{A'}
\frac{d}{dz}y=A'y, \quad A'\in {\mathcal M}_{n}(\mathbb{C}(z))
\end{equation} un système différentiel. Supposons que les systèmes~\eqref{A} et \eqref{A'} sont fuchsiens et que z\'ero est un point singulier régulier de \eqref{A} et de \eqref{A'}. On dit que $A$ et $A'$ sont \emph{localement \'equivalentes en z\'ero} si les matrices de monodromie locale en z\'ero de \eqref{A} et \eqref{A'} sont conjugu\'ees. D'apr\`es le th\'eor\`eme 5.1 de \cite{Singer}, cela revient \`a dire qu'il existe $P$ une matrice inversible \`a coefficients dans $\mathbb{C}(\{z\})$ telle que \[\frac{d}{dz}P=AP-PA',\]
o\`u $\mathbb{C}(\{z\})$ est le corps des s\'eries de Laurent qui convergent.
Soit $x_{0}$ un point singulier r\'egulier de \eqref{A} et de \eqref{A'} On dit que $A$ et $A'$ sont \emph{localement \'equivalentes en} $x_{0}$ si, apr\`es application du changement de variable $z\mapsto z-x_{0}$ \`a \eqref{A} et \eqref{A'}, les nouveaux systèmes sont localement \'equivalents en z\'ero.
On dit que $A$ et $A'$ sont \emph{localement \'equivalentes}, si elles sont localement \'equivalentes en tous points.

\begin{defi}[Système rigide]
	Soient $A\in {\mathcal M}_{n}(\mathbb{C}(z))$ et $\frac{d}{dz}y=Ay$ un syst\`eme diff\'erentiel fuchsien. 
	Un tel système  est dit \emph{rigide} si, pour tout système $\frac{d}{dz}y=A'y$ fuchsien tel que $A$ et $A'$ sont localement \'equivalentes, il existe $P\in {\rm GL}_n(\mathbb{C}(z))$ telle que \[\frac{d}{dz}P=AP-PA'.\] 
	Autrement dit, les matrices $A$ et $A'$ sont \emph{$\mathbb{C}(z)$-\'equivalentes}. 
	
	En général pour un corps quelconque $K$, nous disons que deux matrices $A,A'\in {\mathcal M}_{n}(K(z))$ sont \emph{$K(z)$-équivalentes} s'il existe $P\in {\rm GL}_n(K(z))$ telle que $\frac{d}{dz}P=AP-PA'$.
\end{defi}
\begin{defi}[Groupe rigide] 
	Soient $g_{1},\ldots,g_{r}\in {\rm GL}_n(\mathbb{C})$ et $G$ le groupe engendr\'e par ces matrices. 
	Le r-uplet $g_{1},\ldots,g_{r}$  est dit irr\'eductible si $G$ agit de mani\`ere irr\'eductible sur $\mathbb{C}^{n}$. 
	Le groupe $G$ est dit rigide si les conditions suivantes sont v\'erifi\'ees :
	\begin{enumerate}
		\item le r-uplet $g_{1},\ldots,g_{r}$ est irr\'eductible ;
		\item on a $g_{1}\cdots g_{r}=Id$ ;
		\item pour tout r-uplet $\widetilde{g_{1}},\ldots,\widetilde{g_{r}}$ tel que $\widetilde{g_{1}}\cdots\widetilde{g_{r}}=Id$, 
		o\`u $\widetilde{g_{i}}$ est conjugu\'e \`a $g_{i}$, il existe une matrice $U\in {\rm GL}_n(\mathbb{C})$ telle que 
		$\widetilde{g_{i}}=Ug_{i}U^{-1}$ pour tout $i\in\{1,\ldots,r\}.$
	\end{enumerate}	
\end{defi}

\begin{prop}\label{rigide}
	Soient $A\in {\mathcal M}_{n}(\mathbb{C}(z))$ et $\frac{d}{dz}y=Ay$ fuchsien. Si le groupe de monodromie de $\frac{d}{dz}y=Ay$ est rigide alors le système $\frac{d}{dz}y=Ay$ est rigide. 
\end{prop}
En effet, cette proposition d\'ecoule de la proposition suivante qui est un cas particulier du th\'eor\`eme 6.15 de \cite{Singer}.
\begin{prop}\label{1-1}
	Soient $A,A'\in {\mathcal M}_{n}(\mathbb{C}(z))$. Si $\frac{d}{dz}y=Ay$ et $\frac{d}{dz}y=A'y$ sont deux systèmes fuchsiens dont les groupes de monodromie sont conjugu\'es, alors les matrices $A$ et $A'$ sont $\mathbb{C}(z)$-équivalentes.
\end{prop}

Soient $L\in\mathbb{C}(z)[d/dz]$ et $A$ sa matrice compagnon. Si $L$ est un opérateur fuchsien, alors le système différentiel $\frac{d}{dz}y=Ay$ est fuchsien (cf.\ remarque~\ref{L et A}). 

\begin{defi}[Opérateur rigide]
	Soient $L\in\mathbb{C}(z)[d/dz]$ fuchsien et $A$ sa matrice compagnon. L'opérateur $L$ est rigide si le système différentiel $\frac{d}{dz}y=Ay$ est dit rigide.
\end{defi}

Le calcul de la matrice de monodromie locale n'est pas facile. Par contre, dans le cas où $\gamma$ est un point singulier régulier nous pouvons la calculer à conjugaison près. Nous ferons cela dans le lemme suivant. Avant, nous rappelons la notion \emph{d'exposants en $\gamma$}. Soit $\gamma\in\mathbb{C}$ un point singulier régulier du système~\eqref{A}. Cela veut dire qu'il existe une matrice $A_{\gamma}\in\ {\mathcal M}_{n}(\mathbb{C}(z))$ telle que les matrices $A$ et $A_{\gamma}$ sont localement équivalentes en $\gamma$ et la matrice $(z-\gamma)A_{\gamma}$ n'a pas de pôle en $\gamma$. Les exposants en $\gamma$ sont les valeurs propres de la matrice $[(z-\gamma)A_{\gamma}](\gamma)$. Dans le cas que $\gamma=\infty$, $\gamma$ est un point singulier régulier s'il existe une matrice $A_{\infty}\in {\mathcal M}_{n}(\mathbb{C}(z))$ telle que les matrices $A$ et $A_{\infty}$ sont localement équivalentes en $\gamma$ et la matrice $zA_{\infty}$ n'a pas de pôle en $\gamma$. Les exposants en l'infini sont les valeurs propres de la matrice $[zA_{\infty}](\gamma)$. Notons que cette définition dépend de la matrice $A_{\gamma}$. Par contre, d'après le lemme 2.4 de \cite[Chap V]{Dworkgfunciones}, si $A'_{\gamma}$ est telle que $A$ et $A'_{\gamma}$ sont localement équivalentes en $\gamma$ et $(z-\gamma)A'_{\gamma}$ n'a pas de pôle en $\gamma$, alors les valeurs propres de $[(z-\gamma)A_{\gamma}](\gamma)$ et les valeurs propres de $[(z-\gamma)A'_{\gamma}](\gamma)$ sont égales modulo $\mathbb{Z}$. 

\begin{lem}\label{monodromie}
	Soient $A\in {\mathcal M}_{n}(\mathbb{C}(z))$ et $\gamma\in\mathbb{C}\cup\{\infty\}$ un point singulier régulier du système différentiel $\frac{d}{dz}y=Ay$. Alors il existe une matrice $C\in {\mathcal M}_{n}(\mathbb{C})$ telle que $\exp(2\pi iC)$ est conjuguée à la matrice de monodromie locale de $A$ en $\gamma$ et satisfaisant aux  
	 deux propriétés suivantes:
	
	{\rm \textbf{(a)}} si $\lambda,\beta$ sont deux valeurs propres diff\'erentes de $C$, alors $\lambda-\beta\notin\mathbb{Z}$,
	
	{\rm \textbf{(b)}} l'ensemble des exposants de $A$ en $\gamma$ et l'ensemble des valeurs propres de $C$ sont égaux modulo $\mathbb{Z}$.
	
\end{lem}

\begin{proof}
	Sans perdre de généralité supposons que $\gamma=0$. Soit $A_{0}(z)\in {\mathcal M}_{n}(\mathbb{C}(z))$ telle que les matrices $A(z)$ et $\frac{1}{z}A_{0}(z)$ sont localement équivalentes en zéro et $A_0$ n'a pas de pôle en zéro. Comme $A_0(z)\in {\mathcal M}_{n}(\mathbb{C}[[z]])$, le lemme 8.2 et le corollaire 8.3 de \cite[chap. III]{Dworkgfunciones} nous assure l'existence d'une matrice $W_{0}(z)\in {\mathcal M}_n(\mathbb{C}[[z]])$ telle que les matrices $\frac{1}{z}A_0(z)$ et $\frac{1}{z}W_0(z)$ sont localement équivalentes en zéro et les valeurs propres de $C:=W_{0}(0)$ satisfont aux conditions \textbf{ (a)} et \textbf{(b)} de l'\'enonc\'e. D'apr\`es le th\'eor\`eme 5.1 de \cite{Singer}, on obtient que la matrice de monodromie locale du syst\`eme $\frac{d}{dz}y=\frac{1}{z}W_{0}(z)y$ en $0$ est conjugu\'ee \`a $\exp(2\pi i C)$. Puisque $\frac{1}{z}A_0(z)$ et $\frac{1}{z}W_0(z)$ sont localement équivalentes en zéro, alors toujours par le th\'eor\`eme 5.1 de \cite{Singer}, la matrice de monodromie locale en zéro du syst\`eme $\frac{d}{dz}y=\frac{1}{z}A_{0}(z)y$ est conjugu\'ee \`a $\exp(2\pi i C)$. Finalement, comme $A(z)$ et $\frac{1}{z}A_0(z)$ sont localement équivalentes en zéro alors, d'après le théorème 5.1 de \cite{Singer}, la matrice de monodromie locale en zéro de $\frac{d}{dz}y=A(z)y$ est conjugu\'ee \`a $\exp(2\pi i C)$ et comme nous l'avons d\'ej\`a \'ecrit, $C$ satisfait aux conditions \textbf{(a)} et \textbf{(b)}.	
\end{proof}
\begin{remarque}\label{equivalence}
	Soit $K$ un corps algébriquement clos. Il suit de la démonstration du lemme 8.2 et du corollaire 8.3 de \cite[Chap.III]{Dworkgfunciones} que, si $A_0\in {\mathcal M}_{n}(K(z))$, alors $W_0\in {\mathcal M}_{n}(K(z))$ et les matrices $\frac{1}{z}A_0(z)$ et $\frac{1}{z}W_0(z)$ sont $K(z)$-équivalentes. De plus, la matrice $C:=W_0(0)$ v\'erifie les conditions du lemme~\ref{monodromie}.
\end{remarque}

 D'apr\`es Katz  \cite[Theorem 9.4]{localsystems}, si $L$ est un op\'erateur rigide alors le module différentiel défini par $L$ est un sous-module différentiel d'un module différentiel associ\'e \`a une \'equation de \emph{Picard-Fuchs}.  Or les \'equations de Picard-Fuchs sont munies  d'une structure de Frobenius forte pour presque tout $p$ (voir par exemple \cite[Theorem 22.2.1]{Kedlaya}). Ainsi, on peut s'attendre \`a ce qu'un op\'erateur rigide  soit muni d'une structure de Frobenius forte pour presque tout $p$. Le th\'eor\`eme~\ref{rig} montre que c'est bien le cas et que, de plus, la période $h$ des structures de Frobenius fortes peut \^etre major\'ee explicitement et ind\'ependamment du nombre premier $p$. Comme nous l'avons d\'ej\`a mentionn\'e,  ce dernier point est essentiel au vu de la conjecture~\ref{1} et du th\'eor\`eme~\ref{algebrique}. Comme nous l'a indiqué Gilles Christol, il semble que l'on puisse obtenir l'existence d'une structure de Frobenius forte pour n'importe quel sous module $N$ d'un module différentiel $M$ associé à une équation 
 de Picard-Fuchs. En effet, d'après le théorème 4.2.6 \cite{delignehodge}, $M$ est semi-simple et 
 donc $N$ est semi-simple et on peut le d\'ecomposer comme somme directe finie de sous-modules simples $N_i$. 
 Rappelons que d'apr\`es le Theorem 22.2.1 de \cite{Kedlaya}, $M$ a une structure de Frobenius forte pour presque tout $p$. Soit $p$ l'un de ces nombres premiers et $h$ la p\'eriode de la structure de Frobenius forte correspondante. 
 En notant $\phi$ le Frobenius, on a donc que $\phi^{h}(M)$ et $M$ sont isomorphes. 
 Donc, pour tout $m\geq1$, $\phi^{mh}(N_i)$ est sous module simple de $\phi^{h}(M)$ et on obtient l'existence 
 de deux entiers  $m$ et $m'$ tels que $\phi^{mh}(N_i)$=$\phi^{m'h}(N_i)$. 
Autrement dit, $N_i$ est muni d'une structure de Frobenius forte pour $p$.  On en d\'eduit que $N$ est muni d'une structure de Frobenius forte pour $p$ car l'application $\phi$ respecte les sommes directes. 
Malheureusement, cet argument ne permet pas de majorer pr\'ecis\'ement la p\'eriode $h$ de cette structure de Frobenius forte, ni m\^eme de la majorer ind\'ependamment de $p$.  L'int\'er\^et principal du théorème~\ref{rig} est 
qu'il permet justement de majorer la p\'eriode des structures de Frobenius fortes des opérateurs rigides 
ind\'ependamment de $p$. 
Le th\'eor\`eme~\ref{rig}  a \'egalement \'et\'e obtenu sous une forme l\'eg\`erement diff\'erente, mais essentiellement \'equivalente, par Crew 
\cite{crew}.  Il nous semble n\'eanmoins utile de 
pr\'esenter la d\'emonstration propos\'ee ici qui est \'ecrite dans un langage diff\'erent et plus \'el\'ementaire,  
notamment exempt de toute consid\'eration cohomologique. 
Notons qu'un cas particulier du théorème 1.5 de \cite{esnault} implique aussi qu'un op\'erateur diff\'erentiel satisfaisant aux conditions du théorème~\ref{rig} a une structure de Frobenius forte pour presque tout $p$.  
Mais ce dernier r\'esultat ne donne pas explicitement de renseignement sur la période des structures de Frobenius associ\'ees et ne permet donc pas d'utiliser le th\'eor\`eme~\ref{algebrique} comme nous le faisons.  

Avant d'\'enoncer   le théorème~\ref{rig}, considérons $L$ défini comme en \eqref{L} et supposons que les exposants aux points singuliers r\'eguliers sont des nombres rationnels. Soit $s$ la valuation de $a_0(z)$. Considérons les ensembles suivants:  $\mathfrak{A}_1$  est form\'e du terme constant du polynôme $\frac{a_0(z)}{z^s}$, le coefficient leader de $a_0(z)$ et du discriminant de $a_0(z)$, $\mathfrak{A}_2$ 
est form\'e des dénominateurs des exposants aux points singuliers réguliers de $L$ et  $\mathfrak{A}_3=\left\{\frac{a_i(z)}{a_0(z)}\right\}_{1\leq i\leq n}$. Soit $d$ le plus petit commun multiple 
des d\'enominateurs des exposants de $L$ en les points singuliers réguliers. 
On pose $h_1=\phi(d)$, où $\phi$ est la fonction indicatrice de Euler, $h_2$ la dimension du corps de décomposition du polynôme $a_0(z)$ sur $\mathbb{Q}$, et finalement $h=h_1h_2$.

\begin{theo}\label{rig}
	Soit $L\in\mathbb{Q}[z][d/dz]$ un op\'erateur diff\'erentiel d\'efini comme en \eqref{L}.
	Supposons que les conditions suivantes sont v\'erifi\'ees. 
	\begin{enumerate}
		\item Les points singuliers de $L$ sont r\'eguliers, c'est-\`a dire que $L$ est fuchsien.
		
		\item Les exposants aux points singuliers r\'eguliers sont des nombres rationnels.
		
		\item Le groupe de monodromie de $L$ est rigide.
	\end{enumerate}
	Soit $\mathcal{S}$ l'ensemble des nombres premiers tels que $a_0(z)\in\mathbb{Z}_{(p)}[z]$, tout élément de $\mathfrak{A}_1$ et $\mathfrak{A}_2$ ait une norme $p$-adique égale à 1 et tout élément de $\mathfrak{A}_3$ ait une norme de Gauss inférieure ou égale à 1.  Alors pour tout $p\in\mathcal{S}$, l'opérateur différentiel a une structure de Frobenius forte de période $h$.
\end{theo}

Dans \cite{crew},  Crew montre que, sous les hypoth\`eses 1, 2 et 3 du th\'eor\`eme, si $p$ est un nombre premier tel que $L$ définit un isocristal \emph{surconvergent} et v\'erifiant certaines autres hypoth\`eses (les conditions $C_1$ et $C_3$ dans \cite{crew}), alors $L$ a une structure de Frobenius forte pour $p$. 
Sa preuve repose sur des outils de cohomologie $p$-adique et le fait que la surconvergence lui permet (Theorem 1 et Theorem 2 de \cite{crew}) de définir  la rigidit\'e $p$-adique en termes de la cohomologie $p$-adique. 
Il montre aussi que la p\'eriode $h$ obtenue ne d\'epend pas de $p$. %La v\'erification de si $L$ définit un isocristal \emph{sur-convergent} pour $p$ repose sur des conditions de convergence sur les solutions locales de l'op\'erateur $L$ qui ne sont pas facile \`a \'etablir. Voir 1.2 de \cite{galoispadique} pour une d\'efinition pr\'ecise de \emph{sur-convergence}. Le théorème \ref{rig} est une version effective de théorème 3 de \cite{crew}, car d'après le théorème 3 de \cite{crew}, si $L$ satisfait aux conditions du théorème \ref{rig} alors $L$ a une structure de Frobenius forte $p$ assez grand. En effet, le résultat de Katz \cite{localsystems} implique que si $L$ vérifie les condition du théorème \ref{rig} alors $L$ est \emph{sur-convergent} pour $p$ assez grand et ainsi, d'après le théorème 3 de \cite{crew}, $L$ a une structure de Frobenius forte pour $p$ assez grand.  
L'int\'er\^et de notre approche est son aspect plus \'el\'ementaire puisqu'elle repose sur les aspects classiques de   la théorie des équation différentielles (\`a la fois sur 
$\mathbb{C}(z)$  et $p$-adique) et que nous donnons une description précise de l'ensemble des nombres premiers $p$ qui munissent $L$ d'une structure de Frobenius forte. 

\begin{remarque}
Comme nous l'avons  déjà mentionné, si $L$ est muni d'une structure de Frobenius forte pour $p$, alors son rayon de convergence au point générique est égal à 1.  Cela implique que $L$ définit un isocristal surconvergent. Donc, sous les hypothèses du théorème~\ref{rig}, on obtient a posteriori que si $p\in\mathcal{S}$, alors $L$ définit bien un isocristal surconvergent. 
\end{remarque}

 %%%%%%%%%%%%%%%%%%%%%%%
\section{Démonstration du théorème \ref{algebrique}.}\label{algebricite}

Cette partie est consacr\'ee \`a l'algebricit\'e modulo $p$ des solutions des \'equations diff\'erentielles 
poss\'edant une structure de Frobenius forte pour le nombre premier $p$. Nous d\'emontrons 
le th\'eor\`eme~\ref{algebrique}.  

Rappelons tout d'abord que l'ensemble $\mathcal{W}_{p}$ est compos\'e  des s\'eries 
de la forme 
\[f(z)=\sum_{n\in\mathbb{Z}}a_{n}z^{n},\]
dont les coefficients appartiennent \`a $\mathbb C_p$ et telles que  la famille $\{|a_{n}|\}_{n\in\mathbb{Z}}$ est born\'ee et tend vers 0 lorsque $n$ tend n\'egativement vers l'infini. 
L'anneau $\mathcal{W}_{p}$ est complet pour la norme \[|f|=\sup_{n\in\mathbb{Z}}|a_{n}|.\] 
Par construction, $E_{p}$ est un sous-corps de $\mathcal{W}_{p}$ et on note $\vartheta_{E_{p}}$ les \'el\'ements de $E_{p}$ dont la norme est inf\'erieure ou \'egale \`a 1.

Nous commen\c{c}ons par montrer les deux lemmes suivants.

\begin{lem}\label{reduction}
	Soit $\mathfrak{m}$ l'id\'eal maximal de $\vartheta_{E_{p}}$. Il existe un isomorphisme de corps \[\phi:\vartheta_{E_{p}}/\mathfrak{m}\rightarrow\overline{\mathbb{F}}_{p}(z).\]
\end{lem}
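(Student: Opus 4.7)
Le plan est de construire $\phi$ comme l'unique morphisme issu de la r\'eduction, coefficient par coefficient, des \'el\'ements Gauss-born\'es de $E_{p}$, et d'identifier le but \`a $\overline{\mathbb{F}}_{p}(z)$. Je proc\'ederais en quatre \'etapes successives.

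Premi\`erement, au niveau des polyn\^omes, on d\'efinit la r\'eduction $\vartheta_{\mathbb{C}_{p}}[z]\to\overline{\mathbb{F}}_{p}[z]$ en r\'eduisant chaque coefficient modulo l'id\'eal maximal de $\mathbb{C}_{p}$, dont le corps r\'esiduel est $\overline{\mathbb{F}}_{p}$. En utilisant la multiplicativit\'e de la norme de Gauss sur $\mathbb{C}_{p}[z]$ --- fait classique pour un corps non-archim\'edien --- on l'\'etend en un morphisme d'anneaux $\tilde\phi:\vartheta_{E_{p}}\cap\mathbb{C}_{p}(z)\to\overline{\mathbb{F}}_{p}(z)$ comme suit : \'etant donn\'e $f/g\in\mathbb{C}_{p}(z)$ de norme de Gauss $\leq 1$, on multiplie num\'erateur et d\'enominateur par l'inverse d'un coefficient de $g$ de norme maximale, obtenant ainsi $f_{0}/g_{0}$ avec $|g_{0}|_{\mathcal G}=1$ et $|f_{0}|_{\mathcal G}\leq 1$, puis on envoie $f/g$ sur $\overline{f_{0}}/\overline{g_{0}}\in\overline{\mathbb{F}}_{p}(z)$. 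L'ind\'ependance du choix de repr\'esentant d\'ecoule de l'\'egalit\'e $f_{0}g_{0}'=f_{0}'g_{0}$ et du fait que, la norme de Gauss \'etant multiplicative, les r\'eduits $\overline{g_{0}},\overline{g_{0}'}$ des d\'enominateurs normalis\'es sont non nuls dans $\overline{\mathbb{F}}_{p}[z]$.

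Deuxi\`emement, je prolongerais $\tilde\phi$ en un morphisme $\phi:\vartheta_{E_{p}}\to\overline{\mathbb{F}}_{p}(z)$ par continuit\'e. Pour tout $u\in\vartheta_{E_{p}}$, on choisit une suite $(u_{n})$ d'\'el\'ements de $\vartheta_{E_{p}}\cap\mathbb{C}_{p}(z)$ convergeant vers $u$, ce qui est possible par densit\'e de $\mathbb{C}_{p}(z)$ dans $E_{p}$. Comme le noyau de $\tilde\phi$ est, par construction, exactement l'id\'eal des \'el\'ements de norme de Gauss strictement inf\'erieure \`a $1$, et comme $|u_{n}-u_{m}|_{\mathcal G}<1$ pour $n,m$ assez grands, la suite $\tilde\phi(u_{n})$ est stationnaire ; on pose $\phi(u)$ \'egal \`a cette valeur commune. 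L'ind\'ependance par rapport \`a la suite approchante et le fait que $\phi$ soit un morphisme d'anneaux s'en d\'eduisent imm\'ediatement.

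Troisi\`emement, par construction on a $\phi(u)=0$ si et seulement si $|u|_{\mathcal G}<1$, c'est-\`a-dire $u\in\mathfrak{m}$, ce qui fournit l'injectivit\'e du morphisme induit $\vartheta_{E_{p}}/\mathfrak{m}\to\overline{\mathbb{F}}_{p}(z)$. La surjectivit\'e est \'egalement imm\'ediate car tout \'el\'ement $h\in\overline{\mathbb{F}}_{p}(z)$ s'\'ecrit $\bar p/\bar q$ avec $p,q\in\vartheta_{\mathbb{C}_{p}}[z]$ et $|q|_{\mathcal G}=1$, et alors $p/q\in\vartheta_{E_{p}}$ est un ant\'ec\'edent de $h$ par $\phi$. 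La principale difficult\'e sera de v\'erifier avec soin, dans la premi\`ere \'etape, que la r\'eduction induit effectivement un morphisme d'anneaux bien d\'efini sur les fractions rationnelles, ce qui repose enti\`erement sur la multiplicativit\'e de la norme de Gauss et sur l'inversibilit\'e dans $\overline{\mathbb{F}}_{p}(z)$ des r\'eduits des d\'enominateurs normalis\'es.
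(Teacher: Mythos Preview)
Your argument is correct and follows essentially the same strategy as the paper: identify the residue field of $\mathbb{C}_{p}$ with $\overline{\mathbb{F}}_{p}$, reduce rational functions coefficientwise, and then use the density of $\mathbb{C}_{p}(z)$ in $E_{p}$ to conclude. The only organizational difference is that the paper routes the construction through the ambient ring $\mathcal{W}_{p}$: it first builds a reduction $\widetilde{\omega}:\vartheta_{\mathcal{W}_{p}}/J\to\overline{\mathbb{F}}_{p}((z))$ on formal Laurent series, then restricts to $\vartheta_{E_{p}}$ and observes that, by density, the image actually lands in $\overline{\mathbb{F}}_{p}(z)$. Your version bypasses $\mathcal{W}_{p}$ and constructs $\phi$ directly on $E_{p}$, which is slightly more economical; the paper's detour, on the other hand, makes the ambient target $\overline{\mathbb{F}}_{p}((z))$ explicit, which is convenient later when one reduces power-series solutions $f\in\mathbb{Z}_{(p)}[[z]]\subset\mathcal{W}_{p}$ and wants $\widetilde{\omega}(f_{\mid p})=f_{\mid p}$. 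Both arguments rest on the same two facts: the multiplicativity of the Gauss norm (so that normalized denominators reduce to nonzero polynomials) and the density of $\mathbb{C}_{p}(z)$ in $E_{p}$ (so that reductions stabilize).
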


\begin{proof}
	Soient $\vartheta_{\mathbb{C}_{p}}$ l'anneau des entiers de $\mathbb{C}_{p}$ et  $\mathfrak{M}$ son id\'eal maximal. Soit $x\in\vartheta_{\mathbb{C}_{p}}$, comme $x$ est la limite d'\'el\'ements dans la cl\^oture alg\'ebrique de $\mathbb{Q}_{p}$, il existe une extension finie $K$ de $\mathbb{Q}_{p}$ et $y\in K$ tels que $|x-y|<1$. Ainsi, $|y|\leq1$ et dans $\vartheta_{\mathbb{C}_{p}}/\mathfrak{M}$, $\overline{x}=\overline{y}$. Comme le corps r\'esiduel de $K$ est une extension finie de $\mathbb{F}_{p}$, on a $\overline{y}\in\overline{\mathbb{F}}_p$.
	On d\'efinit \[\omega:\vartheta_{\mathbb{C}_{p}}\rightarrow\overline{\mathbb{F}}_{p}\]
	par $\omega(x)=\overline{y}$. Notons que $\omega(x)$ ne d\'epend pas de $y$ et est un homomorphisme d'annaux. D'apr\`es le lemme de Hensel, $\omega$ est surjectif et si $x\in\mathfrak{M}$, alors $\omega(x)=0$. Ainsi,
	\[\omega:\vartheta_{\mathbb{C}_{p}}/\mathfrak{M}\rightarrow\overline{\mathbb{F}}_{p}\]
	est un isomorphisme tel que $\omega(\overline{x})=\overline{x}$ pour tout $x\in\mathbb{Z}_{p}$. Soit $\vartheta_{\mathcal{W}_{p}}$ l'ensemble des \'el\'ements de $\mathcal{W}_{p}$ dont la norme est inf\'erieure ou \'egale \`a 1. Comme la norme est ultram\'etrique, 
	$\vartheta_{\mathcal{W}_{p}}$ est un anneau et, si $J$ d\'esigne l'ensemble des \'el\'ements de $\mathcal{W}_{p}$ dont la norme est strictement inf\'erieure \`a $1$, alors $J$ est un id\'eal de $\vartheta_{\mathcal{W}_{p}}$ et le quotient de $\vartheta_{\mathcal{W}_{p}}$ par $J$ est isomorphe \`a $\overline{\mathbb{F}}_{p}((z))$. En effet, soit $f(z)=\sum_{n\in\mathbb{Z}}a_{n}z^{n}\in\vartheta_{\mathcal{W}_{p}}$. Alors, pour tout entier $n$, $|a_{n}|\leq1$ et par d\'efinition de l'anneau $\mathcal{W}_{p}$ il existe un nombre naturel $N$ tel que, pour tout $n<-N$, $|a_{n}|<1$. Ainsi, on a \[\overline{f(z)}:=\sum_{n\geq-N}\overline{a_{n}}z^{n}\in(\vartheta_{\mathbb{C}_{p}}/{\mathfrak{M}})((z)).\] 
	Consid\'erons l'application $\widetilde{\omega}:\vartheta_{\mathcal{W}_{p}}/J\rightarrow\overline{\mathbb{F}}_{p}((z))$ 
	d\'efinie par 
	$\widetilde{\omega}(\overline{f})=\sum_{n\geq-N}\omega(\overline{a_{n}})z^{n}$. Si $f\in\mathbb{Z}_{p}[[z]]$, 
	alors $\widetilde{\omega}(\overline{f})=\overline{f}$. Comme $\omega$ est un isomorphisme il en est de m\^eme pour $\widetilde{\omega}$. Construisons \`a pr\'esent l'isomorphisme $\phi$. 
	Comme $\vartheta_{E_{p}}$ est un anneau local et un sous-anneau de $\vartheta_{\mathcal{W}_{p}}$, si $\mathfrak{m}$ d\'esigne  l'id\'eal maximal de $\vartheta_{E_{p}}$, alors $\mathfrak{m}\subset J$.  
	Ainsi,  $\widetilde{\omega}$ peut se restreindre  \`a $\vartheta_{E_{p}}/\mathfrak{m}$. 
	
	Par construction, si $h\in\vartheta_{E_{p}}$, 
	$\overline{h}\in(\vartheta_{\mathbb{C}_{p}}/{\mathfrak{M}})(z)$, d'o\`u $\overline{h}s=t$, o\`u $s,t\in(\vartheta_{\mathbb{C}_{p}}/{\mathfrak{M}})[z]$ sont diff\'erents du polyn\^ome nul.
	Nous pouvons donc d\'efinir, $\phi(\overline{h})=\widetilde{\omega}(t)/\widetilde{\omega}(s)\in\overline{\mathbb{F}}_p(z)$.
\end{proof}

Le lemme suivant est  d\'emonstr\'e dans \cite[Proposition 6.2]{Borisgfonct} dans le cas o\`u $L=\mathbb{C}$. En utilisant le m\^eme argument, on obtient le lemme suivant.

\begin{lem}\label{descend}
	Soit $K$ un corps,  $L$ une extension de $K$ et  $f_{1},\ldots, f_{n}\in K[[z]]$. 
	S'il existe des polyn\^omes $a_{1}(z),\cdots, a_{n}(z)\in L[z]$, non tous nuls, tels que \[a_{1}(z)f_{1}+\cdots+a_{n}(z)f_{n}=0,\]
	alors il existe des polyn\^omes $c_{1}(z),\cdots, c_{n}(z)\in K[z]$, non tous nuls,  tels que \[c_{1}(z)f_{1}+\cdots+c_{n}(z)f_{n}=0.\] 
\end{lem}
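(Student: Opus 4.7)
The plan is to descend the given $L[z]$-linear relation to a $K[z]$-linear relation by decomposing everything in a $K$-basis of $L$. The argument is purely linear-algebraic, using only that $L$ is a $K$-vector space and nothing specific about $K[[z]]$ beyond its structure as a $K$-module; in particular it applies to any family of elements of $K[[z]]$ regardless of their analytic or arithmetic origin.

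First I would fix a $K$-basis $\{e_\lambda\}_{\lambda\in\Lambda}$ of $L$. Since each $a_i(z)\in L[z]$ has only finitely many nonzero coefficients in $L$, and each such coefficient has a finite expansion in the basis, one can uniquely write
\[a_i(z)=\sum_{\lambda\in\Lambda}a_{i,\lambda}(z)\,e_\lambda,\qquad a_{i,\lambda}(z)\in K[z],\]
with only finitely many $a_{i,\lambda}(z)$ nonzero. Substituting into the given relation yields the equality
\[0=\sum_{i=1}^n a_i(z)f_i=\sum_{\lambda\in\Lambda}e_\lambda\left(\sum_{i=1}^n a_{i,\lambda}(z)f_i\right)\]
in $L[[z]]$, where each inner sum $\sum_i a_{i,\lambda}(z)f_i$ belongs to $K[[z]]$.

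Next I would exploit the $K$-linear independence of the $e_\lambda$: examining the coefficient of each power $z^k$ in the right-hand side gives a vanishing $K$-linear combination of the $e_\lambda$ in $L$, which forces the coefficient of $z^k$ in every inner sum to be zero. Since this holds for all $k$, I deduce that $\sum_{i=1}^n a_{i,\lambda}(z)f_i=0$ in $K[[z]]$ for every $\lambda$. Because the $a_i(z)$ are not all zero, at least one of their coefficients is a nonzero element of $L$, whose basis expansion contains a nonzero term; hence for some index $\lambda^*\in\Lambda$ the polynomials $a_{1,\lambda^*}(z),\ldots,a_{n,\lambda^*}(z)$ are not all zero. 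Setting $c_i(z):=a_{i,\lambda^*}(z)$ then produces the desired nontrivial $K[z]$-linear relation $\sum_i c_i(z)f_i=0$.

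I do not anticipate any serious obstacle. The only point requiring a brief check is that the expansion $a_i(z)=\sum_\lambda a_{i,\lambda}(z)e_\lambda$ is genuinely a finite sum, which is clear because $a_i(z)$ is a polynomial in $z$ and each of its finitely many coefficients in $L$ has a finite expansion in the $K$-basis $\{e_\lambda\}$; no convergence issue arises.
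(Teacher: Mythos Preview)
Your argument is correct and is precisely the standard descent-by-basis proof; the paper does not actually prove this lemma but merely cites \cite[Proposition~6.2]{Borisgfonct} and states that the same argument applies, and that cited argument is exactly the one you wrote.
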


Nous pouvons maintenant d\'emontrer le th\'eor\`eme~\ref{algebrique}. 

\begin{proof} [D\'emonstration du th\'eor\`eme~\ref{algebrique}]
	Fixons un nombre premier $p$. Supposons que $A$ est la matrice compagnon de l'op\'erateur diff\'erentiel
	\begin{equation}\label{001}
	L:=\frac{d^{n}}{dz^{n}}+a_{1}(z)\frac{d^{n-1}}{dz^{n-1}}+\cdots+a_{n-1}(z)\frac{d}{dz}+a_{n}(z),
	\end{equation}
	o\`u les $a_{i}(z)$ sont des fractions rationnelles \`a coefficients dans $\mathbb{Q}$. 
	Comme $A$ a une structure de Frobenius forte de p\'eriode $h$, il existe $H\in {\rm GL}_n(E_{p})$ telle que $$\frac{d}{dz}H=AH-H(p^{h}z^{p^{h}-1}A(z^{p^{h}})).$$ Soit  $g\in\mathcal{W}_{p}$ tel que $Lg=0$, alors le vecteur $\vec{y}=(g,g',\ldots,g^{(n-1)})^{T}$ est solution du syst\`eme
	\begin{equation}\label{sys}
	\frac{d}{dz}Y=AY.
	\end{equation}
	Ainsi, le vecteur $\vec{y}(z^{p^{h}})$ est solution du syst\`eme \[\frac{d}{dz}Y=p^{h}z^{p^{h-1}}A(z^{p^{h}})Y.\]
	Par cons\'equent, le vecteur $H\vec{y}(z^{p^{h}})$ est solution du syst\`eme~\eqref{sys}. Comme $E_{p}\subset\mathcal{W}_{p}$, on trouve que $H\vec{y}(z^{p^{h}})$ est un vecteur \`a coefficients dans $\mathcal{W}_{p}$, dont le premier coefficient est une solution de l'\'equation associ\'eé \`a l'op\'erateur $\eqref{001}$. 
	Soit $V:=\{(g,g',\ldots,g^{n-1})\,:\,g\in\mathcal{W}_{p},Lg=0\}$. Il s'agit d'un $\mathbb{C}_{p}$-espace vectoriel. 
	Nous avons  donc construit un endomorphisme
	\begin{align*}
	\psi:V\rightarrow& V \\
	\vec{y}\mapsto& H\vec{y}(z^{p^{h}}).
	\end{align*}
	Comme $\mbox{dim}_{\mathbb{C}_{p}}V=r\leq n$, le th\'eor\`eme de Cayley-Hamilton assure l'existence de 
	$c_{0},\ldots,c_{r-1}\in\mathbb{C}_{p}$ tels que
	\begin{equation}\label{cayley}
	\psi^{r}+c_{r-1}\psi^{r-1}+\cdots+c_{1}\psi+c_{0}=0.
	\end{equation}
	Comme par hypothèse $f(z)\in\mathbb{Z}_{(p)}[[z]]$ et $L$ est annul\'ee par $f(z)$, alors le vecteur $\vec{w}=(f,f',\ldots,f^{(n-1)})$ est dans $V$. Consid\'erons \`a pr\'esent $Z$, le $E_{p}$-espace vectoriel engendr\'e par les \'el\'ements de l'ensemble $\{f^{(j)}(z^{p^{ih}})\,:\, j\in\{0,\ldots,n-1\}, i\in\mathbb{N}\}$. L'\'egalit\'e $\eqref{cayley}$ montre que $Z$ a pour dimension au plus $nr$. 
	Comme $f(z),\ldots, f(z^{p^{nrh}})\in Z$, il existe $j\leq nr$ et $b_{0},\ldots, b_{j}\in E_p$ 
	tels que  
	\[b_{j}f(z^{p^{jh}})+b_{j-1}f(z^{p^{(j-1)h}})+\cdots+b_{0}f(z)=0.\]
	Soit $b_{l}$ tel que $|b_{l}|=\max\{|b_{0}(z)|,\ldots,|b_{j}(z)|\}$ et soit $c_{i}(z)=b_{i}(z)/b_{l}(z)$. Alors, pour tout $i\in\{0,\ldots,j\}$, $|c_{i}|\leq1$ et 
	\[c_{j}f(z^{p^{jh}})+c_{j-1}f(z^{p^{(j-1)h}})+\cdots+c_{0}f(z)=0.\]
	On pose $d_{i}(z)=\overline{c_{i}(z)}$, alors on a
	\begin{equation}\label{10}
	d_{j}(z)(f_{\mid p}(z^{p^{jh}}))+d_{j-1}(z)(f_{\mid p}(z)^{p^{(j-1)h}})+\cdots+d_{0}(z)f_{\mid p}(z)=0.
	\end{equation}
	et $d_0(z),\ldots, d_j(z)$ ne sont toutes nulles car $1=\max{|c_0(z)|,\ldots,|c_j(z)|}$. Soient $\widetilde{\omega}$ l'homomorphisme construit dans le lemme~\ref{reduction} et 
	$t_{i}(z)=\widetilde{\omega}(d_{i}(z))$. Puisque $\widetilde{\omega}$ est un isomorphisme alors $t_0(z),\ldots, t_j(z)$ ne sont pas toutes nulles. Comme $\widetilde{\omega}(f_{\mid p})=f_{\mid p}$ car $f\in\mathbb{Z}_{(p)}[[z]]$, alors $\eqref{10}$ 
	implique que 
	\[t_{j}(z)(f_{\mid p}(z^{p^{jh}}))+t_{j-1}(z)(f_{\mid p}(z^{p^{(j-1)h}}))+\cdots+t_{0}(z)f_{\mid p}(z)=0.\] 
	Finalement, le 
	lemme~\ref{descend} assure l'existence des polyn\^omes $r_{0}(z),\ldots,r_{j}(z)\in\mathbb{F}_{p}(z)$, non tous nuls, 
	tels que 
	\[r_{j}(z)(f_{\mid p}(z)^{p^{jh}})+r_{j-1}(z)(f_{\mid p}(z^{p^{(j-1)h}}))+\cdots+r_{0}(z)f_{\mid p}(z)=0.\]
	Comme les coefficients de $f_{\mid p}(z)$ sont dans $\mathbb F_p$, 
	la derni\`ere expression devient \[r_{j}(z)(f_{\mid p}(z))^{p^{jh}}+r_{j-1}(z)(f_{\mid p}(z))^{p^{(j-1)h}}+\cdots+r_{0}(z)f_{\mid p}(z)=0.\]
	Et puisque $j\leq nr\leq n^{2}$, nous obtenons que le degr\'e de $f_{\mid p}$ sur $\mathbb{F}_{p}(z)$ 
	est major\'e par $p^{n^{2}h}$, comme souhait\'e. 
\end{proof}

\begin{remarque}\label{r}
	La d\'emonstration montre plus pr\'ecis\'ement que le degr\'e d'alg\'ebricit\'e de $f_{\mid p}(z)$ est born\'e par $p^{nrh}$, 
	o\`u $r$ d\'esigne la dimension du $\mathbb{C}_{p}$-espace vectoriel $V$. 
\end{remarque}

%%%%%%%%%%%%%%%%%%%%%%%%%

\section{D\'emonstration du th\'eor\`eme \ref{rig}}\label{sFfrig}

Nous rappelons que $Frob:\mathbb{C}_{p}\rightarrow\mathbb{C}_{p}$ est l'automorphisme de Frobenius de $\mathbb{C}_{p}$ choisi dans la remarque~\ref{frobcp}. \`A pr\'esent, nous pr\'esentons les diff\'erentes \'etapes de la d\'emonstration du th\'eor\`eme~\ref{rig}. Celles-ci seront d\'emontr\'ees dans la partie~\ref{dempas}.

\smallskip

\textbf{Premier pas}. Le premier pas est consacr\'e au calcul des groupes de monodromie locale de $L$ à conjugaison près. Plus pr\'ecis\'ement, soit $A$ la matrice compagnon de $L$ et soient $\gamma_{1}=0,\ldots, \gamma_{r}=\infty$ les points singuliers r\'eguliers de $L$. Pour tout $j\in\{1,\ldots,r\}$, soit $M(A,\gamma_{j})$ la matrice de monodromie locale de $A$ en $\gamma_{j}$. À l'aide du lemme~\ref{monodromie} nous allons montrer qu'elle est conjugu\'ee \`a une matrice de la forme $\exp(2\pi iC_{j})$, o\`u  $C_{j}\in {\mathcal M}_n(\mathbb{C})$ satisfait aux conditions suivantes:

\textbf{(a)} Si $\lambda,\beta$ sont deux valeurs propres diff\'erentes de $C_{j}$, alors $\lambda-\beta\notin\mathbb{Z}$.

\textbf{(b)} L'ensemble des exposants de $L$ en $\gamma_{j}$ et l'ensemble des valeurs propres de $C_j$ sont égaux modulo $\mathbb{Z}$.

%\textbf{Nous rappelons que les conditions \textbf{(a)} et \textbf{(b)} proviennent du lemme \ref{monodromie}. Ces deux conditions nous vont permettre d'établir que, les matrices de monodromie locale du système $\frac{d}{dz}\vec{y}=A\vec{y}$ et du système qui sera construit dans le troisième pas, sont conjuguées.}

\smallskip
 
\textbf{Deuxi\`eme pas}. Nous montrerons que pour $p\in\mathcal{S}$ et tout $j\in\{1,\ldots,r\}$, les matrices $\exp(2\pi iC_{j})$ et $\exp(2\pi ip^{h}C_{j})$ sont conjugu\'ees. Nous rappelons que l'ensemble de nombres premiers 
$\mathcal{S}$ et l'entier $h$ sont définis comme dans le théorème~\ref{rig}.

\smallskip

\textbf{Troisi\`eme pas}. Nous montrerons que pour tout $p\in\mathcal{S}$, la matrice $B=p^{h}z^{p^{h}-1}A(z^{p^{h}})$ est $E_{p}$-\'equivalente \`a la matrice $A$.  La matrice $B$ est dans ${\mathcal M}_n(E_{p})$. La d\'emonstration de ce troisi\`eme pas se d\'ecompose selon les 3 \'etapes suivantes:

\smallskip

\textbf{(i)} Soit $m$ entier strictement positif tel que $-m$ n'est pas une singularité de $L$.  Nous montrerons que $B$ est $E_{p}$-\'equivalente \`a $$G=\frac{\gamma_{1}+m}{(z+m)(z-\gamma_{1})}F_{1}+\cdots+\frac{\gamma_{r-1}+m}{(z+m)(z-\gamma_{r-1})}F_{r-1}-\frac{1}{z+m}F_{r},$$ o\`u $F_{j}\in {\mathcal M}_n(\mathbb{C}_{p})$ pour $j\in\{1,\ldots,r\}.$

\smallskip
Dans la démonstration de \textbf{(ii)} nous utiliserons un isomorphisme de corps $\kappa:\mathbb{C}_{p}\rightarrow\mathbb{C}$. L'isomorphisme $\kappa$ s'\'etend naturellement en un isomorphisme de corps entre $\mathbb{C}_{p}(z)$ et $\mathbb{C}(z)$, que nous notons encore $\kappa:\mathbb{C}_{p}(z)\rightarrow\mathbb{C}(z)$. La matrice $M^{\kappa}\in {\mathcal M}_n(\mathbb{C}(z))$ d\'esigne la matrice que l'on obtient en appliquant $\kappa$ \`a chaque entr\'ee de $M\in {\mathcal M}_{n}(\mathbb{C}_p(z))$. %\textbf{L'automorphisme $\kappa$ existe mais il n'est pas explicite. La raison pour laquelle nous utilisons cet isomorphisme est la suivante: pour les systèmes différentiels à coefficients dans $\mathbb{C}_p(z)$ nous ne disposons pas d'un critère que nous permet savoir quand un système différentiel est rigide. Ainsi, nous ne savons pas comment en déduire quand deux systèmes rigides à coefficients dans $\mathbb{C}_p(z)$ sont $\mathbb{C}_p(z)$-équivalents. Cependant, grâce à la proposition \ref{rigide} et le théorème 6.15 de \cite{Singer}, pour les systèmes différentiels à coefficients dans $\mathbb{C}(z)$ nous disposons d'un critère que nous permet savoir quand un systèmes est rigide et par conséquent, dans ce cas nous pouvons dire quand deux systèmes rigides à coefficients dans $\mathbb{C}(z)$ sont $\mathbb{C}(z)$-équivalents.} 

\smallskip

\textbf{(ii)} Nous montrerons d'une part que la matrice de monodromie locale de $G^{\kappa}$ en $\kappa(\gamma_{j})$, $M(G^{\kappa},\kappa(\gamma_{j}))$, est conjugu\'ee \`a la matrice $\exp(2\pi i p^{h}C_{j})$. D'autre part, nous prouvons que la matrice de monodromie locale en $-m$ est l'identit\'e.

D'apr\`es le deuxi\`eme pas, les matrices $\exp(2\pi i C_{j})$ et $\exp(2\pi ip^{h}C_{j})$ sont conjugu\'ees. Ainsi, il d\'ecoule du premier pas que pour $j\in\{1,\ldots, r\}$, les matrices $M(A,\gamma_{j})$ et $M(G^{\kappa},\kappa(\gamma_{j}))$ sont conjugu\'ees.

\smallskip

\textbf{(iii)} Comme la monodromie de $\frac{d}{dz}y=Ay$ est rigide, les conjugaisons des matrices $M(A,\gamma_{j})$ et $M(G^{\kappa},\kappa(\gamma_{j}))$ entraînent qu'il existe $U\in {\rm GL}_n(\mathbb{C})$ telle que $M(A,\gamma_{j})=UM(G^{\kappa},\kappa(\gamma_{j}))U^{-1}$ pour $1\leq j\leq r$. Il suit donc que les groupes de monodromie des systèmes $Ay=\frac{d}{dz}y$ et $G^{\kappa}y=\frac{d}{dz}y$ sont isomorphes. D'apr\`es la correspondance de Riemman Hilbert, théorème 6.15 de \cite{Singer}, on obtient qu'il existe $H_{1}\in {\rm GL}_n(\mathbb{C}(z))$ telle que \[\frac{d}{dz}H_{1}=AH_{1}-H_{1}G^{\kappa}.\]
On pose $H=H_{1}^{\kappa^{-1}}$, ainsi $H\in {\rm GL}_n(\mathbb{C}_{p}(z))\subset {\rm GL}_n(E_{p})$ et \[\frac{d}{dz}H=A^{\kappa^{-1}}H-HG.\]
Remarquons que $A^{\kappa^{-1}}=A$ car $A$ est une matrice \`a coefficients dans $\mathbb{Q}(z)$. Par cons\'equent, $A$ et $G$ sont $E_{p}$-\'equivalentes. Or, d'apr\`es \textbf{(i)}, on sait que $G$ est $E_{p}$-\'equivalente \`a $B$. Par transitivit\'e on conclut que $A$ et $B$ sont $E_{p}$-\'equivalentes.

%%%%%%%%%%%%%%%%%%%%%%%
\subsection{Disque non singulier et singulier r\'egulier}\label{sinpadic}

Dans cette partie nous \'enon\c cons et démontrons le théorème~\ref{dmfs} ci-dessous. %un r\'esultat d\^u \`a Christol \cite[Th\'eor\`eme 4.2]{Gillesfacteurs} 
Ce résultat nous permettra dans la d\'emonstration du th\'eor\`eme~\ref{rig} de passer de la th\'eorie des \'equations diff\'erentielles p-adiques \`a la th\'eorie des \'equations diff\'erentielles classiques.

Pour tout $\gamma\in\mathbb{C}_p$, nous d\'esignons par $D_{\gamma}$  le disque ouvert de centre $\gamma$ et de rayon 1 et par $D_{\infty}$ l'ensemble  
des \'el\'ements de $\mathbb{C}_p$ dont la norme est strictement sup\'erieure \`a $1$ et $\infty\in D_{\infty}$.
Nous notons $\vartheta_{\mathbb{C}_p}$ l'ensemble des  \'el\'ements de $\mathbb{C}_p$ dont la norme est inf\'erieure ou \'egale \`a 1 et si $\alpha,\gamma\in\vartheta_{\mathbb{C}_p}$ alors $D_{\alpha}=D_{\gamma}$ si, et seulement si $|\alpha-\gamma|<1$. Et, $D_{\alpha}\cap D_{\gamma}=\emptyset$ si, et seulement si $|\alpha-\gamma|=1$.
Nous notons $E(D_{\gamma})$ le compl\'et\'e pour la norme de Gauss des fractions rationnelles qui n'ont pas de p\^ole dans la boule 
$D_{\gamma}$. 

Pour \'enoncer le th\'eor\`eme~\ref{dmfs} nous aurons besoin de rappeler quelques notions de la th\'eorie des \'equations diff\'erentielles p-adiques. Pour cela nous reprenons l'exposition de \cite{Gillesfacteurs}. 

Soient $A\in {\mathcal M}_n(E_p)$ et $\gamma\in\vartheta_{\mathbb{C}_p}$. 
Nous dirons que la matrice $A$ est \emph{non singuli\`ere dans le disque}  $D_{\gamma}$, respectivement \`a l'infini, s'il existe une matrice $A_{\gamma}$, respectivement $A_{\infty}$, $E_p$-\'equivalente \`a $A$ et qui appartient \`a ${\mathcal M}_n(E(D_{\gamma}))$, respectivement \`a $z^{2}{\mathcal M}_n(E(D_{\infty}))$. 
Nous dirons que la matrice $A$ est \emph{singuli\`ere r\'eguli\`ere dans le disque} $D_{\gamma}$, respectivement \`a l'infini, s'il existe un point $\beta$ de $D_{\gamma}$ et une matrice $A_{\gamma}$, respectivement $A_{\infty}$, $E_p$-\'equivalente \`a $A$ tels que $(z-\beta)A_{\gamma}$ appartient \`a ${\mathcal M}_n(E(D_{\gamma}))$, respectivement $zA_{\infty}$ appartient \`a ${\mathcal M}_n(E(D_{\infty}))$.

\begin{theo}\label{dmfs}
	Soient $B\in {\mathcal M}_n(E_p)$ et $S=\{\gamma_1,\ldots,\gamma_{r-1}\}\subset\vartheta_{\mathbb{C}_{p}}$ tel que pour tout $i\neq j$, $|\gamma_{i}-\gamma_{j}|_p=1$. Supposons que  pour tout $\gamma\in S\cup\{\infty\}$, la matrice $B$ est singulière régulière dans le disque $D_\gamma$ et que pour tout $\gamma\notin{S}\cup\{\infty\}$ la matrice $B$ est non singulière dans le disque $D_{\gamma}$. Soit $m$ un entier strictement positif tel que $-m\notin S$. Pour tout $\gamma\in S$, soient $\beta_{\gamma}$ un point de $D_{\gamma}$ et $B_{\gamma}$ une matrice $E_p$-\'equivalente \`a $B$ tels que $(z-\beta_{\gamma})B_{\gamma}$ appartient \`a ${\mathcal M}_n(E(D_{\gamma}))$. Soit $B_{\infty}$ une matrice $E_p$-\'equivalente \`a $B$, telle que $zB_{\infty}$ appartient \`a ${\mathcal M}_n(E(D_{\infty}))$.
	Alors $B$ est $E_p$-\'equivalente \`a une matrice $G$ de la forme 
	\begin{equation}\label{fuchs}
	G=\frac{\beta_{\gamma_{1}}+m}{(z+m)(z-\beta_{\gamma_{1}})}G_{1}+\cdots+\frac{\beta_{\gamma_{r-1}}+m}{(z+m)(z-\beta_{\gamma_{r-1}})}G_{r-1}-\frac{1}{z+m}G_{r},
	\end{equation}
	o\`u,  pour $j\in\{1,\ldots,r-1\}$, $G_{j}$ est une matrice à coefficients dans $\mathbb{C}_p$  semblable à  $[(z-\beta_{\gamma})B_{\gamma}](\beta_{\gamma})$, $G_r$ est une matrice à coefficients dans $\mathbb{C}_p$ semblable à $[-zB_{\infty}](\infty)$ et $\sum_{\gamma\in S\cup\{\infty\}}G_{\gamma}$ est une matrice diagonale à coefficients dans $\mathbb{Z}$.
\end{theo}
Le théorème~\ref{dmfs} est démontré dans \cite[théorème 4.2]{Gillesfacteurs} dans le cas où le disque $D_{\infty}$ est non singulier.

\begin{proof}
	Considérons la matrice $$V=-\frac{1}{z^{2}}B\left(\frac{1-zm}{z}\right),$$
	obtenue en appliquant le changement de variables $z\mapsto \frac{1-zm}{z}$ au syst\`eme $By=\frac{d}{dz}y$. Le disque $D_{\infty}$ est non singulier pour la matrice $V$ car le disque $D_{-m}$ est non singulier pour la matrice $B$. Ainsi nous ramenons toutes les singularit\'es \`a distance finie. 
	Posons pour $j\in\{1,\ldots,r-1\}$, $\tau_{j}=1/(\beta_{\gamma_{j}}+m)$ et $\tau_{r}=0$. Notons que pour $j\in\{1,\ldots, r-1\}$,  $|\beta_{\gamma_j}+m|_p=1$ car $D_{-m}$ est un disque non singulier pour la matrice $B$.
	Consid\'erons l'ensemble $S'=\{\tau_{1},\ldots,\tau_{r-1},0\}$. Nous allons appliquer le théorème 4.2 de \cite{Gillesfacteurs} \`a la matrice $V$ et \`a l'ensemble $S'$. Nous allons donc montrer que $V$ et $S'$ satisfont aux hypoth\`eses du théorème 4.2 de \cite{Gillesfacteurs}. D'apr\`es le changement de variables choisi, il nous reste \`a v\'erifier que:
	\begin{enumerate}
		\item $|\tau_{i}-\tau_j|=1$ pour $i\neq j$;
		\item Soit $\eta$ tel que $|\eta|\leq1$ et $|\tau_i-\eta|=1$ pour $i\in\{1,\ldots,r\}$. Alors $V$ est non singuli\`ere dans le disque $D_{\eta}$;
		\item Les disques singuliers r\'eguliers de $V$ sont exactement les disques $D_{\tau_{j}}$, $1\leq j\leq r-1$, et $D_{0}$.
	\end{enumerate}
		\emph{Preuve du point 1.} On a
			$$\tau_{i}-\tau_j=\frac{1}{\beta_{\gamma_{i}}+m}-\frac{1}{\beta_{\gamma_j}+m}=\frac{\beta_{\gamma_{j}}-\beta_{\gamma_{i}}}{(\beta_{\gamma_{i}}+m)(\beta{\gamma_j}+m)}.$$  Par hypothèse $|\gamma_{j}-\gamma_{i}|=1$ alors $|\beta_{\gamma_{i}}-\beta_{\gamma_{j}}|=1$ et on a déjà vu que $|(\beta_{\gamma_{j}}+m)|=1$ pour $j\in\{1,\ldots,r-1\}$, d'où $|(\beta_{\gamma_{i}}+m)(\gamma_j+m)|=1$. Ainsi $|\tau_{i}-\tau_j|=1$.
			
			\smallskip
			
			\emph{Preuve du point 2.} Comme $|\tau_{i}-\eta|=1$ pour tout $i\in\{1,\ldots, r\}$, alors on a $|\eta|=1$ car $\tau_{r}=0$.
			Supposons que $V$ est singuli\`ere dans le disque $D_{\eta}$, alors la matrice $B$ est singulière dans le disque $D_{\frac{1-\eta m}{\eta}}$. Donc, il existe $i\in\{1,\ldots, r-1\}$ tel que 
		 \[\overline{\beta_{\gamma_{i}}}=\overline{\left(\frac{1-\eta m}{\eta}\right)}\]
			et comme $\beta{\gamma_i}=\frac{1-\tau_im}{\tau_i}$ il suit que \[\overline{\left(\frac{1-\eta m}{\eta}\right)}=\overline{\left(\frac{1-\tau_im}{\tau_i}\right)}.\]
			Ainsi, on a $\overline{\eta}=\overline{\tau_{i}}$. Autrement dit $|\tau_{i}-\eta|<1$, ce qui est une contradiction.
			
			\smallskip
			
			\emph{Preuve du point 3.} Par hypothèse pour chaque $j\in\{1,\ldots,r-1\}$, la matrice $B$ est $E_{p}$-\'equivalentes \`a $B_{\gamma_j}$, où $(z-\beta_{\gamma_j})B_{\gamma_j}$ appartient à ${\mathcal M}_{n}(E(D_{\gamma_j}))$. Ainsi, $B_{\gamma_j}=\frac{1}{z-\beta_{\gamma_j}}H_{\gamma_j}$, où $H_{\gamma_j}$ appartient à ${\mathcal M}_{n}(E(D_{\gamma_j}))$. Montrons que $V$ est $E_{p}$-\'equivalente \`a $$V_{j}=-\frac{1}{z}\cdot\frac{1}{1-z(m+\beta_{\gamma_{j}})}H_{\gamma_j}\left(\frac{1-zm}{z}\right).$$
			En effet, on sait qu'il existe $G_{j}\in {\rm GL}_n(E_{p})$ telle que \[\frac{d}{dz}G_j=BG_j-G_jB_{\gamma_j}.\]
			On pose $T_j:=G_j\left(\frac{1-zm}{z}\right)\in {\rm GL}_n(E_p)$, donc en appliquant la d\'eriv\'ee d'une composition on obtient que
			\begin{equation}
			\begin{split}
			\frac{d}{dz}T_j&=\left[B\left(\frac{1-zm}{z}\right) G_j\left(\frac{1-zm}{z}\right)-G_j\left(\frac{1-zm}{z}\right)B_{\gamma_j}\left(\frac{1-zm}{z}\right)\right]\left(\frac{-1}{z^2}\right)\\
			&=VT_j-T_j\cdot\left(\left(\frac{-1}{z^2}\right)B_{\gamma_j}\left(\frac{1-zm}{z}\right)\right).
			\end{split}
			\end{equation}
			Mais $B_{\gamma_j}=\frac{1}{z-\beta_{\gamma_j}}H_{\gamma_j}$ entraîne que $B_{\gamma_j}\left(\frac{1-zm}{z}\right)=\frac{z}{1-z(m+\beta_{\gamma_j})}H_{\gamma_{j}}\left(\frac{1-zm}{z}\right)$, d'où $\left(\frac{-1}{z^2}\right)B_{\gamma_j}\left(\frac{1-zm}{z}\right)=V_j$ et ainsi $$\frac{d}{dz}T_j=VT_j-T_jV_j.$$
			Donc $V$ et $V_j$ sont $E_p$-\'equivalentes.
			
			Comme $m+\beta_{\gamma_{j}}=1/\tau_{j}$ pour $1\leq i\leq j-1$, il vient que
			\begin{equation}\label{vj}
			V_{j
			}=-\frac{1}{z}\cdot\frac{\tau_j}{\tau_{j}-z}H_{\gamma_{j}}\left(\frac{1-zm}{z}\right).
			\end{equation}
			
			Enfin, comme $B$ est $E_{p}$-\'equivalente \`a $B_{\infty}$, où $B_{\infty}=\frac{1}{z}H_{\infty}$ avec $H_{\infty}$ dans ${\mathcal M}_{n}(E(D_{\infty}))$ alors $V$ est $E_{p}$-\'equivalente \`a
			\begin{equation}\label{vr}
			V_{\infty}:=-\frac{1}{z}\cdot\frac{1}{1-zm}H_{\infty}\left(\left(\frac{1-zm}{z}\right)\right).
			\end{equation}
			
			Pour finir, montrons que, pour tout $j\in\{1,\ldots,r-1\}$, la matrice $(z-\tau_{j})V_{j}$ est non singuli\`ere dans le disque $D_{\tau_j}$. En effet, d'après \eqref{vj}, $(z-\tau_{j})V_{j}=\frac{\tau_j}{z}H_{\gamma_j}\left(\frac{1-zm}{z}\right)$. Soit $y\in D_{\tau_{j}}$, alors $\frac{1-ym}{y}\in D_{\gamma_j}$ et par conséquent, la matrice $H_{\gamma_j}\left(\frac{1-zm}{z}\right)$ n'a pas de pôle en $y$ car $H_{\gamma_j}$ appartient à $M(E(D_{\gamma_{j}}))$. Donc la matrice $(z-\tau_{j})V_{j}$ est non singuli\`ere dans le disque $D_{\tau_j}$. De mani\`ere similaire, on montre que la matrice $zV_{\infty}$ est non singuli\`ere dans le disque $D_0$. 
	
	Par cons\'equent, le théorème 4.2 de \cite{Gillesfacteurs} montre que $V$ est $E_{p}$-\'equivalente \`a la matrice
	\begin{equation}\label{F}
	F=\sum_{i=1}^{r}\frac{1}{z-\tau_{j}}G_{j},
	\end{equation}
	o\`u, pour tout $j\in\{1,\ldots, r-1\}$, $G_{j}$ est une matrice carr\'ee \`a coefficients dans $\mathbb{C}_{p}$ semblable \`a $[(z-\tau_{j})V_{j})](\tau_{j})$, $G_{r}$ est semblable \`a $(zV_{r})(0)$ et $\sum_{i=1}^r G_{j}$ est une matrice diagonale \`a coefficients dans $\mathbb{Z}$. 
	De \eqref{vj} et \eqref{vr} il suit que
	\begin{equation}\label{semblable}
	[(z-\tau_{j})V_{j})](\tau_{j})=H_{\gamma_j}(\gamma_{j})\quad\textup{et}\quad (zV_{\infty})(0)=-H_{\infty}(\infty).
	\end{equation}  
	En particulier, le théorème 4.2 de \cite{Gillesfacteurs} montre qu'il existe $T\in {\rm GL}_n(E_{p})$ telle que \[\frac{d}{dz}T=VT-TF.\] 
	En posant $H_{2}=T\left(\frac{1}{z+m}\right)$, on a $H_{2}\in {\rm GL}_n(E_{p})$ et 
	\begin{equation}\label{f}
	\begin{split}
	\frac{d}{dz}H_{2}&=\left[V\left(\frac{1}{z+m}\right)T\left(\frac{1}{z+m}\right)-T\left(\frac{1}{z+m}\right)F\left(\frac{1}{z+m}\right)\right]\left(\frac{-1}{(z+m)^2}\right)\\
	& =\left[-(z+m)^2BH_2-H_2F\left(\frac{1}{z+m}\right)\right] \left(\frac{-1}{(z+m)^2}\right)\\
	& =BH_2-H_2\cdot\left(\frac{-1}{(z+m)^2}F\left(\frac{1}{z+m}\right)\right).
	\end{split}
	\end{equation}
	Ainsi, $B$ est $E_p$-\'equivalente \`a $G:=-\frac{1}{(z+m)^{2}}F(\frac{1}{z+m})$. Comme $\tau_{j}=1/(\beta_{\gamma_{i}}+m)$, l'\'egalit\'e \eqref{F} donne que
	$$G=\frac{\beta_{\gamma_{1}}+m}{(z+m)(z-\beta_{\gamma_{1}})}G_{1}+\cdots+\frac{\beta_{\gamma_{r-1}}+m}{(z+m)(z-\beta_{\gamma_{r-1}})}G_{r-1}-\frac{1}{z+m}G_{r}.$$ 
	Finalement, d'après \eqref{semblable}, $G_j$ est semblable à $H_{\gamma_j}(\gamma_{j})$ mais, $H_{\gamma_j}(\gamma_{j})=[(z-\beta_{\gamma_j})B_{\gamma_j}](\beta_{\gamma_j})$ donc, $G_j$ est semblable à $[(z-\beta_{\gamma_j})B_{\gamma_j}](\beta_{\gamma_j})$. Encore par \eqref{semblable}, la matrice $G_r$ est semblable à $-H_{\infty}(\infty)$, mais $H_{\infty}(\infty)=[zB_{\infty}](\infty)$ alors $-H_{\infty}(\infty)$ est est semblable $[zB_{\infty}](\infty)$. 
 \end{proof}

\begin{remarque}\label{apparant}
	Il suit de la démonstration du théorème 4.2 de \cite{Gillesfacteurs} que, l'infini est une singularité régulière apparente de $F$. Alors, $-m$ est une singularité régulière apparente de $G$ et par conséquente, $G$ a une base de solutions à coefficients dans $\mathbb{C}_p((z+m))$.
\end{remarque}

%%%%%%%%%%%%%%%%%

\subsection{Lemmes pr\'eparatoires.}

Afin de d\'emontrer les trois pas pr\'ec\'edents, nous aurons besoin des lemmes pr\'eparatoires suivants. 

Le premier lemme est un r\'esultat qui semble classique mais nous utiliserons les id\'ees de la preuve dans la d\'emonstration du th\'eor\`eme~\ref{rig}. On pose \begin{equation}\label{l}
L=\frac{d^{n}}{dz^{n}}+a_{1}(z)\frac{d^{n-1}}{dz^{n-1}}+\cdots+a_{n-1}(z)\frac{d}{dz}+a_{n}(z)\in\mathbb{Q}(z)[d/dz],
\end{equation}

\begin{lem}\label{exp}
	Consid\'erons l'op\'erateur diff\'erentiel \eqref{l} et soit $A$ la matrice compagnon associ\'ee \`a $L$.  Si $\gamma\in\overline{\mathbb{Q}}\cup\{\infty\}$ est un point singulier r\'egulier de $L$, alors il existe $A_{\gamma}\in {\mathcal M}_n(\overline{\mathbb{Q}}(z))$ telle que:
	\begin{enumerate}
		\item  Les matrices $A$ et $\frac{1}{z-\gamma}A_{\gamma}$ sont $\mathbb{Q}(\gamma)(z)$-\'equivalentes. Et si $\gamma$ est l'infini alors $A$ et $\frac{-1}{z}A_{\infty}$ sont $\mathbb{Q}(z)$-\'equivalentes.
		\item $A_{\gamma}$ n'a pas de p\^ole en $\gamma$;
		\item Les valeurs propres de $A_{\gamma}(\gamma)$ sont les exposants de l'\'equation en $\gamma$. 
	\end{enumerate} 
\end{lem} 

\begin{remarque}\label{L et A}
	Ce lemme est encore valide quand $L\in\mathbb{C}(z)[d/dz]$, mais dans ce cas nous pouvons seulement affirmer que $A$ et $\frac{1}{z-\gamma}A_{\gamma}$ sont $\mathbb{C}(z)$-équivalentes et il en va de même pour l'infini. En particulier si $L$ est fuchsien alors le système différentiel $\frac{d}{dz}y=Ay$ est fuchsien.
\end{remarque}

\begin{proof}
	Notons que $z^{n}\frac{d^{n}}{dz^{n}}=\delta(\delta-1)\cdots(\delta+n-1)$, où $\delta=z\frac{d}{dz}$. Ainsi, il existe 
	$a_{1j},\ldots,a_{jj}\in\mathbb{Z}$ tels que $\frac{d^{j}}{dz^{j}}=\frac{a_{1j}}{z^{j}}\delta+\cdots+\frac{a_{jj}}{z^{j}}\delta^{j}$. 
	Soit 
	\[
	G_{n}=\begin{pmatrix}
	1 & 0 & 0 & 0 & \dots & 0 & 0\\
	0 & \frac{1}{z} & 0 & 0 & \dots & 0 & 0\\
	0 & -\frac{1}{z^{2}} & \frac{1}{z^{2}} & 0 & \dots & 0 & 0\\ 
	\vdots & \vdots & \vdots & \vdots & \dots  & \vdots & \vdots\\
	
	0 & \frac{a_{1,n-1}}{z^{n-1}} &  \frac{a_{2,n-1}}{z^{n-1}} & \frac{a_{3,n-1}}{z^{n-1}} &\dots & \frac{a_{n-2,n-1}}{z^{n-1}} & \frac{1}{z^{n-1}}
	\end{pmatrix}\in {\rm GL}_n(\mathbb{Z}[1/z])
	\]
	la matrice exprimant $\{1,\dots,\frac{d^{n-1}}{dz^{n-1}}\}$ en fonction de $\{1,\delta,\ldots,\delta^{n-1}\}$.
	
	Soit  $\gamma$ un point singulier r\'egulier de $L$. Nous commen\c{c}ons par consid\'erer un nouvel op\'erateur 
	$$
	L_{\gamma}:=\frac{d^{n}}{dz^{n}}+a_{1}(z+\gamma)\frac{d^{n-1}}{dz^{n-1}}+\cdots+a_{n-1}(z+\gamma)\frac{d}{dz}y+a_{n}(z+\gamma)$$
	que l'on \'ecrit sous la forme 
	\begin{equation}\label{delta}
	\delta^{n}+q_{1}(z)\delta^{n-1}+\cdots+q_{n-1}(z)\delta +q_{n}(z),
	\end{equation}
	o\`u  $\delta=z\frac{d}{dz}$  et
	\begin{equation}\label{q}
	(q_n(z),\ldots,q_1(z),1)=(a_{n}(z+\gamma),\ldots,a_{1}(z+\gamma),1)z^{n}G_{n+1}.
	\end{equation}
	On rappelle que les exposants de \eqref{l} en $\gamma$ sont les z\'eros du polyn\^ome $$\lambda^n+q_1(0)\lambda^{n-1}+\cdots+q_{n-1}(0)\lambda+q_n(0),$$ o\`u $q_{n}(z),\ldots, q_{1}(z)$ sont d\'efinis dans \eqref{q}.
	
	\smallskip
	
	Notons que $q_{i}(z)\in\mathbb{Q}(\gamma)(z)$ pour $1\leq i\leq n$. 	  
	Soit 
	\[
	B_{\gamma}=\begin{pmatrix}
	0 & 1 & \dots & 0 & 0\\
	\vdots & \vdots & \vdots & \vdots & \vdots\\
	0 & 0 & \dots & 0 & 1\\
	-q_{n}(z) & -q_{n-1}(z) & \dots & -q_{2}(z) & -q_{1}(z)
	\end{pmatrix}.
	\]
	D'apr\`es le th\'eor\`eme de Fuchs, $B_{\gamma}$ n'a pas de p\^ole en z\'ero. Montrons que les valeurs propres de $B_{\gamma}(0)$ sont les exposants en $\gamma$. En effet, le polyn\^ome caract\'eristique de $B_{\gamma}(0)$ est $(-1)^{n}(\lambda^n+q_1(0)\lambda^{n-1}+\cdots+q_{n-1}\lambda+q_n(0))$. Posons 
	\begin{equation}\label{aj}
	A_{\gamma}=B_{\gamma}(z-\gamma)\in {\mathcal M}_{n}(\mathbb{Q}(\gamma)(z)).
	\end{equation}
	Alors, $A_{\gamma}$ n'a pas de p\^ole en $\gamma$ et les valeurs propres de $A_{\gamma}(\gamma)$ sont les exposants en $\gamma$. Ainsi, elle appartient \`a ${\mathcal M}_n(\mathbb{Q}\{z-\gamma\})$. Donc $A_{\gamma}$ satisfait aux points 2 et 3 du lemme~\ref{exp}. Montrons qu'elle satisfait aussi le point 1.
	
	Soit $C_{\gamma}$ la matrice compagnon associ\'ee \`a l'\'equation $L_{\gamma}$. Montrons que 
	$$\frac{d}{dz}G_{n}=C_{\gamma}G_{n}-G_{n}\frac{1}{z}B_{\gamma}.$$ Soit $f$ une solution quelconque de $L_{\gamma}$. On pose $(\textbf{d}f)=(f,f',\ldots,f^{(n-1)})^t$ et $(\boldsymbol{\delta}f)=(f,\delta f,\ldots,\delta^{(n-1)}f)^t$. Alors, $\frac{d}{dz}(\textbf{d}f)=C_{\gamma}(\textbf{d}f)$ et $\delta(\boldsymbol{\delta}f)=B_{\gamma}(\boldsymbol{\delta}f)$. D'après la construction de la matrice $G_n$, nous avons que $(\textbf{d}f)=G_n(\boldsymbol{\delta}f)$. En appliquant $d/dz$ à cette dernière égalité, on a que $$C_{\gamma}(\textbf{d}f)=\left(\frac{d}{dz}G_{n}\right)(\boldsymbol{\delta}f)+G_n\frac{d}{dz}(\boldsymbol{\delta}f)=\left(\frac{d}{dz}G_{n}\right)(\boldsymbol{\delta}f)+G_n\frac{1}{z}\delta(\boldsymbol{\delta}f).$$ Mais $(\textbf{d}f)=G_n(\boldsymbol{\delta}f)$ et $\delta(\boldsymbol{\delta}f)=B_{\gamma}(\boldsymbol{\delta}f)$. Donc,  $$C_{\gamma}G_{n}(\boldsymbol{\delta}f)=\left(\frac{d}{dz}G_{n}\right)(\boldsymbol{\delta}f)+G_{n}\frac{1}{z}B_{\gamma}(\boldsymbol{\delta}f).$$
	Finalement, comme $f$ est une solution quelconque de $L_{\gamma}$ et que l'espace des solutions de $L_{\gamma}$ est de dimension $n$ sur $\mathbb{C}$, on obtient que  $$C_{\gamma}G_{n}=\frac{d}{dz}G_{n}+G_{n}\frac{1}{z}B_{\gamma}.$$
	Notons que $A=C_{\gamma}(z-\gamma)$, alors en posant $H_{\gamma}:=G_{n}(z-\gamma)\in {\rm GL}_n(\mathbb{Q}(\gamma)(z))$, il vient \[\frac{d}{dz}H_{\gamma}=AH_{\gamma}-H_{\gamma}\frac{1}{z-\gamma}A_{\gamma}.\]
	Pour l'infini, consid\'erons l'\'equation diff\'erentielle \begin{equation}\label{infini}
	\frac{d^{n}}{dz^{n}}y+b_{1}(z)\frac{d^{n-1}}{dz^{n-1}}y+\cdots+b_{n-1}(z)\frac{d}{dz}y+b_{n}(z)y=0,
	\end{equation} 
	o\`u $(b_{n}(z),\ldots,b_{1}(z),1)=(a_{n}(1/z),\ldots,a_{1}(1/z),1)\frac{(-1)^{n}}{z^{2n}}W_{n}$ et \[
	W_{n}=\begin{pmatrix}
	1 & 0 & 0 & 0 & \dots & 0 & 0\\
	0 & -z^{2} & 0 & 0 & \dots & 0 & 0\\
	0 & 2z^{3} & z^{4} & 0 & \dots & 0 & 0\\ 
	\vdots & \vdots & \vdots & \vdots & \dots  & \vdots & \vdots\\
	
	0 & * &  * & * &\dots & * & (-z^{2})^{n}
	\end{pmatrix}
	\]
	qui est la matrice qui exprime le vecteur $(1,D,\ldots, D^n)$ en fonction de $(1,\frac{d}{dz},\ldots,\frac{d}{dz^n})$, o\`u $D=-z^{2}\frac{d}{dz}$. 
	
	Comme l'infini est singulier r\'egulier, alors z\'ero est un point singulier r\'egulier de \eqref{infini} et par d\'efinition les exposants \`a l'infini de $\eqref{l}$ sont les exposants en z\'ero de \eqref{infini}. Notons $\widetilde{A}$ la matrice compagnon de \eqref{infini}. Il existe une matrice $\widetilde{A}_{0}$ qui n'a pas de p\^ole en z\'ero, les valeurs propres de $\widetilde{A}_{0}(0)$ sont les exposant en z\'ero de \eqref{infini} et $\frac{1}{z}\widetilde{A}_{0}$ et $\widetilde{A}$ sont $\mathbb{Q}(z)$-\'equivalentes. Alors il existe $\widetilde{H}_{0}\in {\rm GL}_n(\mathbb{Q}(z))$ telle que 
	$$\frac{d}{dz}\widetilde{H}_{0}=\frac{1}{z}\widetilde{A}_{0}\widetilde{H}_{0}-\widetilde{H}_{0}\widetilde{A}.
	$$
	Ainsi, si $\widetilde{G}=\widetilde{H}_{0}(1/z)\in\mathbb{Q}(z)$ on a $$\frac{d}{dz}\widetilde{G}=-\frac{1}{z}\widetilde{A}_{0}(1/z)\widetilde{G}-\widetilde{G}\cdot\left(\frac{-1}{z^2}\widetilde{A}(1/z)\right).$$
	 Soit $f$ un solution de $L$ et soit $g=f(1/z)$, on pose $(\textbf{d}f)=(f,f',\ldots, f^{(n-1)})^t$, $(\textbf{d}g)=(g,g',\ldots, g^{(n-1)})^t$. À l'aide des égalités $\textbf{d}f=(W_n\textbf{d}g)(1/z)$, $\frac{d}{dz}(\textbf{d}f)=A\textbf{d}f$ et $\frac{d}{dz}(\textbf{d}g)=\widetilde{A}\textbf{d}g$, on obtient que  
	 $$\frac{d}{dz}W_{n}=-(\frac{1}{z^{2}}A(1/z))W_{n}-W_n\widetilde{A}.$$ Posons $T_{\infty}=W_{n}(1/z)\in {\rm GL}_n(\mathbb{Q}(z))$, il vient  donc 
	 $\frac{d}{dz}T_{\infty}=A(z)T_{\infty}-T_{\infty}\left(\frac{-1}{z^2}\widetilde{A}(1/z)\right).$
	Maintenant, en posant $H_{\infty}=\widetilde{G}T_{\infty}^{-1}\in {\rm GL}_n(\mathbb{Q}(z))$, il vient 
	\[\frac{d}{dz}H_{\infty}=-\frac{1}{z}\widetilde{A}_{0}(1/z)H_{\infty}-H_{\infty}A.\] Par conséquent, les matrices $A$ et $-\frac{1}{z}\widetilde{A}_{0}(1/z)$ sont $\mathbb{Q}(z)$-\'equivalentes. On pose $A_{\infty}=\widetilde{A}_{0}(1/z)$, ainsi la matrice $A_{\infty}$ satisfait aux conditions demand\'ees.
\end{proof} 

Notons que pour tout nombre premier $p$, $\gamma$ appartient \`a la cl\^oture alg\'ebrique de $\mathbb{Q}_{p}$, car il annule un polyn\^ome \`a coefficients dans $\mathbb{Q}\subset\mathbb{Q}_{p}$. La d\'emonstration pr\'ec\'edente montre que $A_{\gamma},H_{\gamma}\in {\rm GL}_n(\mathbb{Q}(\gamma)(z))$ et $A_{\infty},H_{\infty}\in {\rm GL}_n(\mathbb{Q})(z)$, notamment $A_{\gamma},H_{\gamma}\in {\rm GL}_n(E_{p})$ et ainsi les matrices $A$, $\frac{1}{z-\gamma}A_{\gamma}$ sont $E_p$-équivalentes.
On obtient le corollaire suivant. 
\begin{coro}\label{corolemme}
	Soient $L\in\mathbb{Q}(z)[d/dz]$, $A$, $A_{\gamma}\in {\mathcal M}_n(\overline{\mathbb{Q}}(z))$, $\gamma\in\overline{\mathbb{Q}}\in\{\infty\}$ comme dans le lemme~\ref{exp}.
	\begin{enumerate}
		\item Soit $p$ un nombre premier tel que le disque $D_{\gamma}\subset\mathbb{C}_{p}$ ne contient pas d'autres points singulier de $L$. Alors le disque $D_{\gamma}$ est non singulier pour $A_{\gamma}$.
		\item Soit $p$ un nombre premier tel que tous les points singuliers de $L$ \`a distance finie ont norme $p$-adique \'egale \`a 1. Supposons que l'infini est un point singulier r\'egulier de $L$. Alors le disque $D_{\infty}$ est non singulier pour la matrice $A_{\infty}$.
		\item Soit $p$ un nombre premier tel que $||A||_{p}\leq 1$ et $|\gamma|_{p}=1$, alors $||A_{\gamma}||_{p,}\leq 1$ et $||\frac{1}{z-\gamma}A_{\gamma}||_{\mathcal G,p}\leq 1$.
	\end{enumerate}
	
\end{coro}
Pour $A=(a_{ij})\in {\mathcal M}_n(E_{p})$, $||A||_{\mathcal{G},p}=Max(|a_{ij}|_{\mathcal G,p})$.
\begin{proof}
	Avec les notations de la preuve du lemme~\ref{exp}, l'\'equation~\eqref{aj} donne que \[
	A_{\gamma}=\begin{pmatrix}
	0 & 1 & \dots & 0 & 0\\
	\vdots & \vdots & \vdots & \vdots & \vdots\\
	0 & 0 & \dots & 0 & 1\\
	-q_{n}(z-\gamma) & -q_{n-1}(z-\gamma) & \dots & -q_{2}(z-\gamma) & -q_{1}(z-\gamma)
	\end{pmatrix}.
	\]
	et de \eqref{q} on a que
	\begin{equation}\label{q'}
	(q_n(z-\gamma),\ldots,q_1(z-\gamma),1)=
	(a_{n}(z),\ldots,a_{1}(z),1)(z-\gamma)^{n}G_{n+1}(z-\gamma).
	\end{equation}
	Notons que $(z-\gamma)^nG_{n+1}(z-\gamma)\in {\rm GL}_n(\mathbb{Z}[z-\gamma])$.
	
	\begin{enumerate}
		\item Cela revient \`a montrer que $A_{\gamma}\in {\mathcal M}_n(E(D_{\gamma}))$, c'est-\`a-dire que $A_{\gamma}$ n'a pas de p\^ole en $D_{\gamma}$. En effet, soit $\alpha\in\overline{\mathbb{Q}}$ une singularit\'e de $A_{\gamma}$. D'apr\`es le lemme~\ref{exp}, on a $\alpha\neq\gamma$ car $A_{\gamma}$ n'a pas de p\^ole en $\gamma$. L'\'equation \eqref{q'} donne que $\alpha$ est une singularit\'e de $A$ et d'apr\`es l'hypoth\`ese, $\alpha\notin D_{\gamma}$, ainsi $A_{\gamma}\in {\mathcal M}_n(D_{\gamma})$.
		
		\item Rappelons que $A_{\infty}=\widetilde{A}_{0}(1/z)$. Notons que les singularit\'es de \eqref{infini} sont $0$ et $1/\gamma$, avec $\gamma$ une singularit\'e de $L$. De l'hypoth\`ese, $|1/\gamma|_{p}=1$, ainsi $D_{0}$ ne contient pas d'autre point singulier de \eqref{infini}. Alors, du point 1 du corollaire, $A_{0}$ est non singulier dans le disque $D_{0}$ et ainsi $A_{\infty}$ est non singulier dans le disque $D_{\infty}$.
		
		\item  D'apr\`es l'hypoth\`ese, $||(a_{n}(z),\ldots,a_{1}(z),1)||_{\mathcal G,p}\leq1$, $||(z-\gamma)||_{\mathcal G,p}=1$ et notons que \[
		G_{n+1}(z-\gamma)=\begin{pmatrix}
		1 & 0 & 0 & 0 & \dots & 0 & 0\\
		0 & \frac{1}{z-\gamma} & 0 & 0 & \dots & 0 & 0\\
		0 & -\frac{1}{(z-\gamma)^{2}} & \frac{1}{(z-\gamma)^{2}} & 0 & \dots & 0 & 0\\ 
		\vdots & \vdots & \vdots & \vdots & \dots  & \vdots & \vdots\\
		
		0 & \frac{a_{1,n}}{(z-\gamma)^{n}} &  \frac{a_{2,n}}{(z-\gamma)^{n}} & \frac{a_{3,n}}{(z-\gamma)^{n}} &\dots & \frac{a_{n-1,n}}{(z-\gamma)^{n}} & \frac{1}{(z-\gamma)^{n}}
		\end{pmatrix}.
		\] 
		Comme les $a_{ij}$ sont entiers, on a $||G_{n+1}(z-\gamma)||_{\mathcal G,p}\leq1$ et l'\'equation \eqref{q'} donne $$||(q_n(z-\gamma),\ldots,q_1(z-\gamma),1)||_{\mathcal G,p}\leq1.$$
		Ainsi, $||A_{\gamma}||_{\mathcal G,p}\leq1$.
		Finalement, comme $||\frac{1}{z-\gamma}||_{\mathcal G,p}=1$, alors $||\frac{1}{z-\gamma}A_{\gamma}||_{\mathcal G,p}=||\frac{1}{z-\gamma}||_{\mathcal G,p}||A_{\gamma}||_{\mathcal G,p}\leq 1.$
	\end{enumerate}
	
\end{proof}
Nous aurons besoin du lemme suivant pour la démonstration du deuxième pas du théorème~\ref{rig}.

\begin{lem}\label{padique}
	Soient $a_0(z)\in\mathbb{Q}[z]$, $K$ le corps de décomposition de $a_0(z)$ sur $\mathbb{Q}$, $h_2$ la dimension de $K$ comme $\mathbb{Q}$-espace vectoriel et $s$ la valuation de $a_0(z)$. Considérons l'ensemble $\mathcal{S'}$ des nombres premiers tels que le coefficient dominant de $a_0(z)$ et le terme constant du polynôme $a_0(z)/z^s$ aient une norme $p$-adique égale à 1 et  que $a_0(z)\in\mathbb{Z}_{(p)}[z]$. Alors pour tout $p\in\mathcal{S'}$, tout entier $m\geq1$ et toute racine $\gamma$ de $a_0(z)$, on a $$\mid\gamma^{p^{mh_2}}-\gamma\mid_p<\frac{1}{p}<\mid\pi_p\mid.$$  
\end{lem}

\begin{remarque}
	Notons que l'ensemble $\mathcal{P}\setminus\mathcal{S'}$ est fini.
\end{remarque}

\begin{proof}
	Si $\gamma=0$ il n'a rien à prouver. Supposons donc que $\gamma\neq0$. Soit $p\in\mathcal{S}'$, alors $a_0(z)\in\mathbb{Z}_p[z]$ car $\mathbb{Z}_{(p)}\subset\mathbb{Z}_p$. Soient $K_p$ le corps de décomposition de $a_0(z)$ sur $\mathbb{Q}_p$ et $h_3$ la dimension de $K_p$ comme $\mathbb{Q}_p$-espace vectoriel. Il suit du théorème 6.1 de \cite[Chap. I]{Dworkgfunciones} que les racines non nulles de $a_0(z)$ ont une norme égale à 1.  Soit $P(z)=a_0(z)/z^s$. Notons que $P_0(z)\in\mathbb{Z}_{(p)}[z]$. Comme $|P(0)|_{p}=1$ et toutes les racines de $P(z)$ ont norme égale à 1 alors $P(z)$ est irréductible sur $\mathbb{Q}_p$. Soit $\bar{P}(z)$ le polynôme qui est obtenu après avoir réduit les coefficients de $P(z)$ modulo l'idéal maximal de $\mathbb{Z}_{(p)}$. Comme $P(z)$ est irréductible sur $\mathbb{Q}_p$ et $P(z)$ est à coefficients dans $\mathbb{Z}_{(p)}$ alors le lemme de Hensel nous assure que $\bar{P}(z)$ est irréductible sur $\mathbb{F}_p$. Soient $L=\mathbb{Q}_p(\gamma)$ et $k=\mathbb{F}_p(\bar{\gamma})$. Comme $P(\gamma)=0$ et $P(z)$ est irréductible sur $\mathbb{Q}_p$ alors $[L:\mathbb{Q}_p]=deg(P(z))$. De même, comme $\bar{P}(\bar{\gamma})=0$ et $\bar{P}(z)$ est irréductible sur $\mathbb{F}_p$ alors $[k:\mathbb{F}_p]=deg(\bar{P}(z))$. Mais, $deg(P(z))=deg(\bar{P}(z))$ et ainsi, $[L:\mathbb{Q}_p]=deg(P(z))=[k:\mathbb{F}_p]$. Alors, d'après le petit théorème de Fermat, $\gamma^{p^{deg(P(z))}}\equiv\gamma\mod p$. Puisque $L$ est un sous corps de $K_p$ alors $deg(P(z))$ divise $h_3$ et ainsi, $\gamma^{p^{h_3}}\equiv\gamma\mod p$. Donc, pour tout entier $n\geq1$,  $\gamma^{p^{nh_3}}\equiv\gamma\mod p$.
	Finalement, comme $K$ est une extension galoisienne de $\mathbb{Q}$, on obtient que $h_2=h_3r$, où $r$ est un entier strictement positif. Ainsi, pour tout entier $m\geq1$,  $\gamma^{p^{mh_2}}\equiv\gamma\mod p.$ Finalement, on obtient que pour tout entier $m\geq1$, \[|\gamma^{p^{mh_2}}-\gamma|_{p}<\frac{1}{p}<|\pi_{p}|.\]
\end{proof}
L'importance du troisi\`eme pas repose sur le fait que nous allons passer de la th\'eorie des \'equations diff\'erentielles p-adiques \`a la th\'eorie des \'equations diff\'erentielles classiques, et pour cela nous utiliserons le th\'eor\`eme~\ref{dmfs}. Dans un premier temps, nous \'etudions les disques singuliers r\'eguliers de $B$. D'apr\`es le lemme~\ref{exp}, on obtient que $B$ est $E_{p}$-\'equivalente \`a $\frac{pz^{p-1}}{z^{p}-\gamma_{j}}A_{\gamma_{j}}(z^{p})$. De plus, d'apr\`es le choix de $p$, le corollaire~\ref{corolemme} nous garantit que la matrice $A_{\gamma_{j}}(z^{p})$ est non singuli\`ere dans le disque $D_{\gamma_{j}}$. Ainsi, la matrice  $(z-\gamma_{j})(\frac{1}{z^{p}-\gamma_{j}})A_{j}(z^p)$  a comme singularit\'es les racines $p$-i\`eme de $\gamma_{j}$ qui sont dans $D_{\gamma_j}$. Pour surmonter cette difficult\'e, nous consid\`ererons la transformation $z\mapsto(z+\gamma_{j})^{p}-\gamma_{j}$.  

\smallskip

Dans l'\'etude de cette transformation nous devons reprendre la construction de $Frob_{z^p}$ faite dans la partie~\ref{sff}. De mani\`ere plus g\'en\'erale, pour $\omega\in E_{p}$ v\'erifiant 
\begin{equation}\label{dif}
|\omega-z^{p^{h}}|<1
\end{equation}
pour un certain $h$, nous construisons $Frob_{\omega}:E_{p}\rightarrow E_{p}$ comme suit. Nous d\'efinissons $Frob_{\omega}:\mathbb{C}_{p}(z)\rightarrow E_{p}$ donn\'e par $$Frob_{\omega}\left(\frac{\sum_{i}a_{i}z^i}{\sum_{j}b_{j}z^j}\right)=\frac{\sum_{j}Frob(a_{i})\omega^i}{\sum_{j}Frob(b_{j})\omega^{j}},$$ o\`u $(\sum_{i}a_{i}z^i)/(\sum_{j}b_{j}z^j)\in\mathbb{C}_{p}(z)$.
Remarquons que $\sum_{j}Frob(b_{j})\omega^{j}\neq0$ car d'apr\`es \eqref{dif}, $\omega$ est transcendent.
Le Frobenius $Frob_{\omega}$ est continu car c'est une isom\'etrie (voir le lemme~\ref{prop1}), il s'\'etend donc au corps des \'el\'ements analytiques $E_{p}$, not\'e encore $Frob_{\omega}:E_{p}\rightarrow E_{p}$. De plus, c'est \`a nouveau une isom\'etrie.
Soit $A$ une matrice de taille $n$ \`a coefficients $E_{p}$, on consid\`ere la matrice $F_{\omega}(A):=\frac{d}{dz}(\omega)A^{Frob_{\omega}}$, o\`u $A^{Frob_{\omega}}$ est la matrice obtenue en appliquant $Frob_{\omega}$ \`a chaque entr\'ee de $A$.   

De la proposition 4.1.2 de \cite{Gillesmoduldiff} on sait que si $A\in {\mathcal M}_{n}(E_p)$ a norme inf\'erieure ou \'egale \`a 1 alors le syst\`eme $Ay=\frac{d}{dz}y$ a une basse de solution dans l'anneau des s\'eries \`a coefficients dans $E_p$ qui convergent pour $\mid x\mid<\mid\pi_p\mid$, donc en analogie avec la proposition  4.7.3 de \cite{Gillesmoduldiff}, nous obtenons la proposition suivante.

\begin{prop}\label{foncteurisomorphes}
	Soit $\omega\in E_{p}$ tel que $|\omega-z^{p^{h}}|<|\pi_{p}|$ et  $A\in {\mathcal M}_n(E_{p})$ de norme inf\'erieure ou \'egale \`a 1. Alors $F_{\omega}(A)$ et $F_{z^{p^{h}}}(A)$ sont $E_{p}$-\'equivalentes.
\end{prop}

Nous allons utiliser la proposition~\ref{foncteurisomorphes} dans l'\'etape \textbf{(i)} du troisi\`eme pas afin de montrer que $B$ est $E_{p}$-\'equivalente \`a $F_{\omega}(\frac{1}{z-\gamma_{k}}A_{\gamma_{k}})$, avec $\omega=(z+\gamma_{j})^{p^h}-\gamma_{j}$, où $Frob(\gamma_k)=\gamma_j$. %Pour d\'emontrer l'\'etape \textbf{(ii)}, nous remarquons que la matrice $B$ est singuli\`ere r\'eguli\`ere dans le disque $D_{\infty}$ et nous appliquerons le th\'eor\`eme \ref{dmfs}(théorème 4.2 de \cite{Gillesfacteurs}) \`a la matrice $V$ obtenue apr\`es le changement de variables $z\mapsto\frac{1-z}{z}$ sur le syst\`eme $B\vec{y}=\frac{d}{dz}\vec{y}$. Finalement, en appliquant le changement $z\mapsto\frac{1}{z+1}$, qui est l'inverse du premier changement de variables, nous montrerons que $B$ est $E_{p}$-\'equivalente \`a la matrice $G.$

\medskip

Un des points importants de la d\'emonstration de la proposition 4.7.3 de \cite{Gillesmoduldiff} est que $Frob_{z^{p^{h}}}$ est une isom\'etrie. Pour appliquer la proposition~\ref{foncteurisomorphes} nous devons montrer que $Frob_{\omega}$ est une isométrie. Nous d\'emontrons cela dans le lemme suivant.
\begin{lem}\label{prop1}
	Soit $h$ un entier strictement positif et $\omega\in E_p$ tels que $|\omega-z^{p^{h}}|<1$. Alors, il existe un endomorphisme isom\'etrique, $Frob_{\omega}:E_p\rightarrow E_p$  tel que $Frob_{\omega}(z)=\omega$. De plus, pour tout $e$ dans $E_p$, on a 
	\[\frac{d}{dz}(Frob_{\omega}(e))=\frac{d}{dz}(\omega)Frob_{\omega}\left(\frac{d}{dz}e\right).\] 
\end{lem}

\begin{proof}
	Soit $Frob:\mathbb{C}_p\rightarrow\mathbb{C}_p$ l'automorphisme de Frobenius choisi dans la remarque~\ref{frobcp}. 
	On a $\overline{\omega}=\overline{z}^{p^{h}}$ et $|\omega|=1$.	
	D\'efinissons $Frob_{\omega}:\mathbb{C}_p(z)\rightarrow E_p$ de la mani\`ere suivante. 
	\'Etant donn\'es $P(z)=\sum_{i=0}^{n}a_{i}z^{i}$ et  $Q(z)=\sum_{j=0}^{m}b_{j}z^{j}\in \mathbb{C}_p[z]$, on pose 
	\[Frob_{\omega}\left(\frac{P}{Q}\right)=\frac{\sum_{i=0}^{n}Frob(a_{i})\omega^{i}}{\sum_{j=0}^{m}Frob(b_{j})\omega^{j}}.\] 
	Notons que ${\sum_{j=0}^{m}Frob(b_{j})\omega^{j}}\neq0$ car $\omega$ est transcendant sur $K$.
	Montrons que $Frob_{\omega}$ est une isom\'etrie. 
	Soient $l\in\{0,\ldots,n\}$ et $k\in\{0,\ldots,m\}$ tels que  $|a_{l}|=|P|$ et $|b_{k}|=|Q|$. 
	Alors $P=a_{l}P_{1}$ et $Q=b_{k}Q_{1}$ avec $P_{1}=\sum_{i=0}^{n}c_{i}z^{i}$ o\`u $c_{i}=\frac{a_{i}}{a_{l}}$, $Q_{1}=\sum_{j=0}^{m}d_{j}z^{j}$ et $d_{j}=\frac{b_{j}}{b_{k}}$. On pose $R=\frac{P_{1}}{Q_{1}}$. \\
	Ainsi, on a $|P_{1}|=1=|Q_{1}|$, $|R|=1$ et
	\begin{equation}\label{sigma}
	\overline{Frob_{\omega}(R)}=\left(\overline{\frac{\sum_{i}c_{i}^{p}z^{ip^{h}}}{\sum_{j} d_{j}^{p}z^{jp^{h}}}}\right)=\left(\overline{\frac{\sum_{i}c_{i}z^{ip^{h-1}}}{\sum_{j} d_{j}z^{jp^{h-1}}}}\right)^{p}=\overline{R(z^{p^{h-1}})}^{p}.
	\end{equation}

	On obtient que \[|Frob_{\omega}(R)-(R(z^{p^{h-1}}))^{p}|<1,\]  et comme $|R(z^{p^{h}-1})|=1$, alors $|Frob_{\omega}(R)|=1$ et $|Frob_{\omega}(P/Q)|=|P/Q|$.\\
	
	Donc $Frob_{\omega}:\mathbb{C}_p(z)\rightarrow E_p$ est continue et isom\'etrique. Elle peut se prolonger de mani\`ere unique au corps $E_p$ en un endomorphisme isom\'etrique. De la contruction de $Frob_{\omega}$ et de la d\'erivation de la compos\'ee, il suit que pour toute fraction rationnelle $P/Q\in \mathbb{C}_p(z)$, on a	\[\frac{d}{dz}(Frob_{\omega}(P/Q))=\frac{d}{dz}(\omega)Frob_{\omega}\left(\frac{d}{dz}(P/Q)\right).\]
	Comme $Frob_{\omega}$ et $\frac{d}{dz}$ sont continues, on obtient que, pour tout $e\in E_p$, on a  \[\frac{d}{dz}(Frob_{\omega}(e))=\frac{d}{dz}(\omega)Frob_{\omega}\left(\frac{d}{dz}e\right).\]
\end{proof}

\subsection{D\'emonstration des pas}\label{dempas}
Nous sommes maintenant pr\^ets \`a d\'emontrer le th\'eor\`eme~\ref{rig}.
\begin{proof}[\textbf{D\'emonstration du premier pas}]
	
	Soient $\gamma_{1}=0,\ldots, \gamma_{r-1},\gamma_{r}=\infty$ les points singuliers de $L$ et $A$ sa matrice compagnon. D'apr\`es le lemme~\ref{exp}, on a que $\gamma_j$ est un point singulier régulier du système $\frac{d}{dz}y=Ay$. Ainsi, d'après le lemme~\ref{monodromie}, il existe $C_j\in {\mathcal M}_{n}(\mathbb{C})$ telle que $M(A,\gamma_j)$ est conjugu\'ee \`a $\exp(2\pi i C_{j})$, où, si $\lambda,\beta$ sont deux valeurs propres différentes de $C_{j}$ alors $\lambda-\beta\notin\mathbb{Z}$ et si $\alpha_{ij}$ est un exposant de $\frac{d}{dz}y=Ay$ en $\gamma_{j}$ alors il existe un entier $m\in\mathbb{Z}$ tel que $\alpha+m$ est une valeur propre de $C$. Mais, toujours d'après le lemme~\ref{exp}, les exposants de $\frac{d}{dz}y=Ay$ en $\gamma_{j}$ sont les exposants de $L$ en $\gamma_{j}$. Ainsi, $C_j$ satisfait aux conditions \textbf{(a)} et \textbf{(b)} du premier pas	
\end{proof}

\begin{remarque}\label{a}
	D'après le lemme~\ref{exp}, nous pouvons réécrire le point \textbf{(b)} comme suit: l'ensemble des valeurs propres de $A_j(\gamma_{j})$ et l'ensemble des valeurs propres de $C_j$ sont égaux modulo $\mathbb{Z}$.
\end{remarque}

%\begin{remarque}\label{b}
%	D'apr\`es le lemme $\ref{exp}$  nous pouvons r\'e\'ecrire le point \textbf{(b)} comme suit: si $\alpha_{i,j}$ est une valeur propre de $A_{j}(\gamma_{j})$, alors il existe $m\in\mathbb{Z}$ tel que $\alpha_{i,j}+m$ est une valeur propre de $C_{j}$.
%\end{remarque}
		
	 Soit $\mathfrak{A}$ l'ensemble des nombres  alg\'ebriques form\'e par les points $\gamma_{2},\ldots,\gamma_{r-1}$, les diff\'erences $\gamma_{i}-\gamma_{j}$, $i\neq j$, et les d\'enominateurs des exposants des points $\gamma_{1},\ldots,\gamma_{r}$. 
	Si $p\in\mathcal{S}$, alors les \'el\'ements de $\mathfrak{A}$ ont tous une norme 
	$p$-adique  \'egale \`a 1. En effet, comme $p\in\mathcal{S}$ et $\gamma_{2},\ldots,\gamma_{r-1}$ sont les racines non nulles de $a_0(z)$ alors,  d'après le théorème 6.1 de \cite[Chap. I]{Dworkgfunciones}, leurs normes $p$-adique sont égales à 1. 
	Le discriminant de $a_0(z)$ est donnée par $\prod_{i\neq j}(\gamma_i-\gamma_{j})^2$. Si $p\in\mathcal{S}$, 
	on a donc que $\mid\prod_{i\neq j}(\gamma_i-\gamma_{j})^2\mid_p=1$. De plus, comme la norme est ultramétrique, on obtient que $\mid\gamma_i-\gamma_{j}\mid_p\leq 1$ pour $i\neq j$. On en  d\'eduit donc que 
	$\mid\gamma_{i}-\gamma_{j}\mid_p=1$ pour $i\neq j$. Rappelons enfin que si $p\in\mathcal{S}$, alors 
	$||A||_{\mathcal G,p}\leq1$.
	
	\medskip
	
	D\'esignons par $\alpha_{1,j},\ldots,\alpha_{n,j}$ les exposants de $L$ en $\gamma_{j}$. 
	 Notons $d$  le plus petit commun multiple 
	 des d\'enominateurs des $\alpha_{i,j}$ pour $1\leq i\leq n$ et $1\leq j\leq r$.  Rappelons que $h_1=\phi(d)$. On sait que si $p\in\mathcal{S}$, on a $|d|_{p}=1$. Donc, $p$ et $d$ sont premiers entre eux et $p^{h_1}\equiv1\mod{d}$.  Ainsi, pour tout $p\in\mathcal{S}$, on obtient que
	\[p^{h_1}\equiv1\mod{d}.\]
	Finalement, pour tout $i\in\{1,\ldots,n\}$ et tout $j\in\{1,\ldots,r\}$, il vient 
	\begin{equation*}
	p^{h_1}\alpha_{i,j}\equiv\alpha_{i,j}\mod{\mathbb{Z}}.
	\end{equation*}

	D'apr\`es le lemme~\ref{padique}, on a 
	\begin{equation}\label{h}
	|\gamma_{j}^{p^{h}}-\gamma_{j}|_{p}<\mid\pi_p\mid, 
	\end{equation}
	pour tout $p\in\mathcal S$ et tout $j\in\{1,\ldots,r-1\}$. 
	Comme $h=h_1h_2$,  on obtient que 
	\begin{equation}\label{expo}
	p^{h}\alpha_{i,j}\equiv\alpha_{i,j}\mod{\mathbb{Z}},
	\end{equation} 
	pour tout $i\in\{1,\ldots,n\}$, tout $j\in\{1,\ldots,r\}$ et tout $p\in\mathcal{S}$. 
	
\begin{proof}[\textbf{D\'emonstration du deuxi\`eme pas}]	
	Soit $p\in\mathcal{S}$ et montrons que les matrices $\exp(2\pi iC_{j})$ et $\exp(2\pi ip^{h}C_{j})$ sont conjugu\'ees. Soit $M$ la forme de Jordan de $C_j$, alors $C_j=UMU^{-1}$, où $U\in {\rm GL}_n(\mathbb{C})$. Soient $\lambda_{1,j},\ldots,\lambda_{r,j}$ les valeurs propres de $C_j$. Soit $J_{\lambda_{i,j}}$ un bloc de Jordan de $M$ qui correspond à la valeur propre $\lambda_{i,j}$. Ainsi, $J_ {\lambda_{i,j}}=\lambda_{i,j}I_{n_{\lambda_{i,j}}}+N_{\lambda_{i,j}},$ où $I_{n_{\lambda_{i,j}}}$ est la matrice identité de taille $n_{\lambda_{i,j}}$ et $N_{\lambda_{i,j}}$ est une matrice carrée nilpotente de taille $n_{\lambda_{i,j}}$. D'après le premier pas, $\lambda_{i,j}=\alpha+m$, où $\alpha$ est un exposant de $L$ en $\gamma_{j}$ et $m$ un entier. Ainsi, 
	\begin{equation}\label{expJ}
	\exp(2\pi iJ_{\lambda_{i,j}})=\exp(2\pi i\alpha)\exp(2\pi iN_{\lambda_{i,j}}).
	\end{equation} Les valeurs propres de $p^hC_j$ sont $p^h\lambda_{1,j},\ldots,p^h\lambda_{r,j}$. Puisque  $p^hC_j=U(p^hM)U^{-1}$, alors $p^hM$ est constituée des blocs de Jordan de $M$ multipliés par $p^h$. Ainsi, la matrice  $p^hJ_{\lambda_{i,j}}=p^h(\lambda_{i,j}I_{n_{\lambda_{i,j}}}+N_{\lambda_{i,j}})$ est un bloc de la matrice $p^hM$.
	 Montrons que $p^h\lambda_{i,j}\equiv\alpha\mod\mathbb{Z}$. Comme on l'a déjà dit $\lambda_{i,j}=\alpha+m$, où $\alpha$ est un exposant de $L$ en $\gamma_{j}$ et $m$ un entier. Alors $p^h\lambda_{i,j}=p^h\alpha+p^hm$ et il suit de \eqref{expo} que $p^h\alpha=\alpha+m'$, où $m'$ est un entier. Donc, $p^h\lambda_{i,j}=\alpha+m'+p^hm.$ Comme $m'+p^hm$ est un entier, on a $p^h\lambda_{i,j}\equiv\alpha\mod\mathbb{Z}$. Par conséquent, on obtient que 
	 \begin{eqnarray}\label{expp^hj}
	 \exp(p^h2\pi iJ_{\lambda_{i,j}})&=&\exp(2\pi ip^{h}\lambda_{i,j})\exp(2\pi ip^hN_{\lambda_i})\\
	\nonumber &=&\exp(2\pi i\alpha)\exp(2\pi iN_{\lambda_{i,j}}).
	 \end{eqnarray}
	 D'apr\`es \eqref{expJ} et \eqref{expp^hj}, on a  $\exp(2\pi iJ_{\lambda_{i,j}})=\exp(p^h2\pi iJ_{\lambda_{i,j}})$. Il suit donc que les matrices $\exp(2\pi iM)$ et $\exp(2\pi p^hM)$ sont conjuguées. Finalement, comme  $\exp(2\pi iC_j)=Uexp(2\pi iM)U^{-1}$ et $\exp(2\pi ip^hC_j)=U\exp(2\pi ip^hM)U^{-1}$, 
	 les matrices $\exp(2\pi iC_j)$ et $\exp(2\pi ip^hC_j)$ sont conjuguées. 
\end{proof}
%\begin{remarque}\label{mul}
%	Comme $n_{\lambda_{i,j}}$ est la multiplicité de $\lambda_{i,j}$ comme valeur propre de $C_j$ alors, d'après la remarque \ref{multiplicite} et le lemme \ref{exp}, $n_{\lambda_{i,j}}$ est égal au cardinal de l'ensemble $\lambda_{i,j}+\mathbb{Z}\cap\{\alpha_{1,j},\ldots,\alpha_{n,j}\}$. De plus, comme $p^h\equiv1\mod d$ alors $n_{\lambda_{i,j}}$ est aussi égal au cardinal de l'ensemble $p^h\lambda_{i,j}+\mathbb{Z}\cap\{p^h\alpha_{1,j},\ldots,p^h\alpha_{n,j}\}.$ Finalement, de notre preuve découle aussi que, $n_{\lambda_{i,j}}$ est égal à la multiplicité de $p^h\lambda_{i,j}$ comme valeur propre de $p^hC_j$.
%\end{remarque}

\noindent\textit{\textbf{D\'emonstration du troisi\`eme pas}}.---  	
Soit $p\in\mathcal{S}$ et soit $B=p^{h}z^{p^{h}-1}A(z^{p^{h}})$. Nous allons montrer que $A$ et $B$ sont $E_{p}$-\'equivalentes. Dans cette partie, nous verrons $B$ comme une matrice \`a coefficients dans $E_{p}$. Pour chaque point singulier $\gamma_{1}=0,\ldots, \gamma_{r-1},\gamma_{r}=\infty$ de $L$, nous notons $A_i$ la matrice construite dans le lemme~\ref{exp} et correspondant au point $\gamma_{i}$.
 
\begin{proof}[D\'emonstration du point \rm{(i)}]	
	Nous allons appliquer le théorème~\ref{dmfs}. Pour cela nous commençons par montrer que les seuls disques singuliers r\'eguliers de $B$ sont $D_{\gamma_{1}},\ldots, D_{\gamma_{r-1}},D_{\infty}$.  Par construction de l'ensemble $\mathfrak{A}$, on a $|\gamma_j-\gamma_k|_{p}=1$ pour $j\neq k$. Donc $\overline{\gamma_{j}}\neq\overline{\gamma_{k}}$ pour $j\neq k$ et $D_{\gamma_{j}}\cap D_{\gamma_{k}}=\emptyset$ pour $j\neq k$.
	Montrons dans un premier temps que si le disque $D_{\alpha}$ de centre $\alpha$ et de rayon 1 est singulier alors il existe $j\in\{1,\ldots,r-1\}$ tel que $D_{\alpha}=D_{\gamma_{j}}$. 
	Comme $D_{\alpha}$ est un disque singulier de $B$, il existe $t\in D_{\alpha}$ tel que $t$ est une singularit\'e de $B$, alors $t^{p^{h}}$ est une singularit\'e de $A$ et il existe $i\in\{1,\ldots,r-1\}$ tel que $t^{p^{h}}=\gamma_{i}$.  D'apr\`es l'in\'egalit\'e $\eqref{h}$, on a $\overline{\gamma_{i}}=\overline{\gamma_{i}}^{p^{h}}$. Ainsi, on obtient que \[\overline{t}^{p^{h}}=\overline{\gamma_{i}}^{p^{h}}.\]
	Il suit que $\overline{t}=\overline{\gamma_{i}}$. Ainsi $|t-\gamma_{i}|<1$ et comme la norme est ultram\'etrique, on obtient que $$|\alpha-\gamma_{i}|=|\alpha-t+t-\gamma_{i}|<1$$ et $D_{\alpha}=D_{\gamma_{i}}$. 
	
	 Montrons dans un deuxi\`eme temps que, pour chaque $j\in\{1,\ldots,r-1\}$, il existe une matrice $B_{j}$ \`a coefficients dans $E_{p}$ telle que $(z-\gamma_{j})B_{j}$ est \`a coefficients dans $E(D_{\gamma_{j}})$ et $B$ est $E_{p}$-\'equivalente \`a $B_{j}$. Cela impliquera que la matrice $B$ est singuli\`ere r\'eguli\`ere dans les disques $D_{\gamma_{j}}$. D'apr\`es la remarque qui suit la preuve du lemme~\ref{exp}, il existe $H_{j}\in {\rm GL}_n(E_{p})$ telle que \[\frac{d}{dz}H_{j}=AH_{j}-H_{j}\frac{1}{z-\gamma_{j}}A_{j}.\]
	 Posons $Q_{j}:=H_{j}(z^{p^{h}})\in {\rm GL}_n(E_p)$. On a \[\frac{d}{dz}Q_{j}=\frac{d}{dz}(H_j)(z^{p^{h}})p^hz^{p^{h}-1}\]
	 et ainsi
	 \begin{equation}\label{B}
	 \frac{d}{dz}Q_{j}=BQ_{j}-Q_{j}\left(\frac{p^{h}z^{p^{h}-1}}{z^{p^{h}}-\gamma_{j}}\right)A_{j}(z^{p^{h}}).
	 \end{equation}
	 Alors la matrice $B$ est $E_p$-équivalente à $\left(\frac{p^{h}z^{p^{h}-1}}{z^{p^{h}}-\gamma_{j}}\right)A_{j}(z^{p^{h}})$.
	 
	 Notons que $Frob(\gamma_k)=\gamma_{j}$ car $Frob:\mathbb{C}_{p}\rightarrow\mathbb{C}_{p}$ fixe $\mathbb{Q}$ et ainsi les singularit\'es de $Frob(A)=A$ sont $\{Frob(\gamma_{1}),\ldots,Frob(\gamma_{r-1}),\infty\}=\{\gamma_{1},\ldots,\gamma_{r-1},\infty\}$.
	 De plus, d'apr\`es \eqref{aj}, on a $Frob(A_k)=A_j$.
	Nous allons appliquer la proposition~\ref{foncteurisomorphes} à la matrice $\frac{1}{z-\gamma_{k}}A_k$, avec $\omega=[(z-\gamma_{j})^{p^{h}}+\gamma_{j}]$.  Montrons d'abord que $|\omega-z^{p^{h}}|_{p}<|\pi_{p}|_{p}$. Comme $\omega=(z-\gamma_{j})^{p^{h}}+\gamma_{j}$, on a  \[\omega-z^{p^{h}}=(\gamma_{j}-\gamma_{j}^{p^{h}})+\sum_{k=0}^{p^{h}-1}(-1)^{k}\binom{p^{h}}{k}z^{p^{h}-k}\gamma_{j}^k.\]
	D'apr\`es le th\'eor\`eme de Lucas, on a $\binom{p^{h}}{k}\equiv0\mod{p}$ pour $0\leq k\leq p^{h}-1$ et ainsi $|\binom{p^{h}}{k}|_{p}\leq|p|_{p}$. Or $\gamma_{j}\in\mathfrak{A}$, donc on a $|\gamma_{j}|_{p}=1$ et \[\left|\sum_{k=0}^{p^{h}-1}(-1)^{k}\binom{p^{h}}{k}z^{p^{h}-k}\gamma_{j}^{k}\right|_{p}\leq |p|_{p}<|\pi_{p}|_{p}.\]
	L'in\'egalit\'e \eqref{h} donne alors que $|\gamma_{j}^{p^{h}}-\gamma_{j}|_{p}<|\pi_{p}|_{p}.$ Il suit
	\begin{equation}\label{omega}
	|\omega-z^{p^{h}}|_{p}<|\pi_{p}|_{p}<1.
	\end{equation}
	
	Comme $p\in\mathcal{S}$, on a $||A||_{\mathcal G,p}\leq1$ et $|\gamma_{j}|_{p}=1$ donc le corollaire~\ref{corolemme} entra\^ine que $||\frac{1}{z-\gamma_{j}}A_{j}||_{\mathcal G,p}\leq1$. Comme $Frob:\mathbb{C}_{p}\rightarrow\mathbb{C}_{p}$ est un automorphisme isom\'etrique, on a $||\frac{1}{z-\gamma_{k}}A_{k}||_{\mathcal G,p}\leq1$. 
	
	Maintenant, la proposition~\ref{foncteurisomorphes} appliqu\'e \`a $\frac{1}{z-\gamma_{k}}A_{k}$ implique que les matrices 
	$$
	\frac{d}{dz}(z^{p^{h}})\left(\frac{1}{z-\gamma_{k}}A_{k}\right)^{Frob_{z^{p^{h}}}}\quad\textup{et}\quad B_j=\frac{d}{dz}(\omega)\left(\frac{1}{z-\gamma_{k}}A_{k}\right)^{Frob_{\omega}}
	$$
	sont $E_{p}$-\'equivalentes. D'apr\`es la construction de $Frob_{z^{p^{h}}}$,  on a   $$\frac{d}{dz}(z^{p^{h}})\left(\frac{1}{z-\gamma_{k}}A_{k}\right)^{Frob_{z^{p^{h}}}}=\frac{p^{h}z^{p^{h}-1}}{z^{p^{h}}-\gamma_{j}}A_{j}(z^{p^{h}}).$$
	Notons que \[\frac{d}{dz}(\omega)=p^h[(z-Frob(\gamma_j))^{p^{h}-1}],\]
	et donc
	\begin{equation}\label{bj}
	B_{j}=\frac{p^{h}}{z-\gamma_{j}}A_{j}((z-\gamma_{j})^{p^{h}}+\gamma_{j}).
	\end{equation}
	D'apr\`es \eqref{B}, il découle que la matrice $B$ et $B_j$ sont $E_{p}$-\'equivalentes.	De plus, on a $(z-\gamma_{j})B_{j}=p^{h}A_{j}((z-\gamma_{j})^{p^{h}}+\gamma_{j})$. Maintenant, si $y\in D_{\gamma_j}$ alors $(y-\gamma_{j})^{p^{h}}+\gamma_{j}\in D_{\gamma_j}$ et comme $A_{j}$ est non singuli\`ere dans le disque $D_{\gamma_{j}}$, on obtient que $(z-\gamma_{j})B_{j}\in {\mathcal M}_n(E(D_{\gamma_{j}}))$.
  %D'apr\`es le corollaire \ref{corolemme}, la matrice $A_{j}$ est non singuli\`ere dans le disque $D_{\gamma_j}$ et il en est de m\^eme pour la matrice $A_{j}(z^{p^h})$. Donc la matrice  $$(z-\gamma_{j})\left(\frac{p^{h}z^{p^{h}-1}}{z^{p^{h}}-\gamma_{j}}\right)A_{j}(z^{p^{h}})$$ a comme singularit\'es les racines $p^h$-i\`emes de $\gamma_{j}$. %Malheureusement ces racines appartiennent \`a $D_{\gamma_j}$ et donc cette matrice ne peut \^etre la matrice $B_j$ recherch\'ee. Pour surmonter cette difficult\'e, nous allons utiliser la \textbf{transformation} $z\mapsto(z+\gamma_{j})^{p^h}-\gamma_{j}$.  
	
	Finalement, montrons que $D_{\infty}$ est un disque singulier r\'egulier de $B$. Pour cela, nous posons
	\begin{equation}\label{binfity}
	B_{r}:=\frac{d}{dz}(z^{p^{h}})(\frac{1}{z^{p^h}})A_{r}(z^{p^{h}})
	\end{equation}
	D'apr\`es le lemme~\ref{exp}, il existe $H_{r}\in {\rm GL}_n(\mathbb{Q}(z))$ telle que
	\[\frac{d}{dz}H_{r}=AH_{r}-H_{r}\frac{1}{z}A_{r}.\]
	Posons $Q_{r}:=H_{r}(z^{p^{h}})$, alors on a \[\frac{d}{dz}Q_{r}=BQ_{r}-Q_{r}B_{r}.\]
	Ainsi la matrice $B$ est $E_{p}$-\'equivalente \`a $B_{r}$. D'apr\`es le lemme~\ref{exp}, $A_{r}$ n'a pas de p\^ole \`a l'infini et le corollaire~\ref{corolemme} donne que $A_{r}$ n'a pas de singularit\'e dans le disque $D_{\infty}$. Ainsi le disque $D_{\infty}$ est non-singulier pour $zB_{r}$, autrement dit $zB_{r}\in {\mathcal M}_n(E(D_{\infty}))$. D'après le lemme~\ref{exp}, les matrices $A$ et $\frac{1}{z-\gamma_j}A_j$ sont $\mathbb{Q}(z)(\gamma_{j})$-équivalentes. Puisque $\frac{1}{z-\gamma_j}A_j\in {\mathcal M}_{n}(\mathbb{Q}(z)(\gamma_{j}))$, à la suit de la remarque~\ref{equivalence}, il existe $W_j\in {\mathcal M}_{n}(\mathbb{C}_p(z))$ telle que $W_j$ n'a pas de pôle en $\gamma_{j}$, $C_j=W_j(\gamma_{j})$ et $\frac{1}{z-\gamma_j}A_j$ et $\frac{1}{z-\gamma_{j}}W_j$ sont $\mathbb{C}_p(z)$-équivalentes. D'où, $B_j$ et $R_j=\frac{p^h}{z-\gamma_{j}}W_j((z-\gamma_{j})^{p^h}+\gamma_{j})$ sont $E_p$-équivalentes. Montrons que $W_j$ n'a pas de pôle dans le disque $D_{\gamma_j}$. En effet, soit $P(z)/Q(z)\in\mathbb{C}_p(z)$ une entrée de $W_j$. Écrivons $P(z)=\sum_{i=0}^ra_i(z-\gamma_j)^i$, $Q(z)=\sum_{l=0}^sb_l(z-\gamma_j)^l\in\mathbb{C}_p[z-\gamma_j]$. Soient $a\in\{a_0,\ldots,a_r\}$ tel que $|a|=\max\{|a_0|,\ldots,|a_r|\}$ et $P_1(z)=\frac{1}{a}P(z)$. Notons que $|P_1(z)|_{\mathcal{G}}=1$. Soient $b\in\{b_0,\ldots,b_s\}$ tel que $|b|=\max\{|b_0|,\ldots,|b_s|\}$ et $Q_1(z)=\frac{1}{b}Q(z)$. Notons que $|Q_1(z)|_{\mathcal{G}}=1$. Alors $\frac{P(z)}{Q(z)}=\frac{a}{b}\frac{P_1(z)}{Q_1(z)}$ et $|P_1(z)/Q_1(z)|_{\mathcal{G}}=1$. Notons que les fractions rationnelles $\frac{P(z)}{Q(z)}$ et $\frac{P_1(z)}{Q_1(z)}$ ont les mêmes pôles. Par conséquent, $\gamma_j$ n'est pas un pôle de $\frac{P_1(z)}{Q_1(z)}$. Alors, $\frac{P_1(z)}{Q_1(z)}=\sum_{n\geq0}c_n(z-\gamma_j)^n$. Puisque $|P_1(z)/Q_1(z)|_{\mathcal{G}}=1$ alors, pour tout $n\geq0$, $|c_n|\leq1$. Ainsi, la réduction de $\frac{P_1(z)}{Q_1(z)}$ est égale à $\sum_{n\geq0}\overline{c_n}(z-\overline{\gamma_{j}})^n$. D'où, $\overline{\gamma_{j}}$ n'est pas un pôle de la réduction de la fraction rationnelle $\frac{P_1(z)}{Q_1(z)}$. Par conséquent, la fraction rationnelle $\frac{P_1(z)}{Q_1(z)}$ n'a pas de pôle dans le disque $D_{\gamma_{j}}$ et ainsi, la fraction rationnelle $\frac{P_(z)}{Q_(z)}$ n'a pas de pôle dans le disque $D_{\gamma_{j}}$. Par conséquent, la matrice $W_j$ n'a pas de pôle dans le disque $D_{\gamma_j}$. Alors, $(z-\gamma_{j})R_j\in {\mathcal M}_{n}(E(D_{\gamma_{j}}))$. Ainsi, d'après le théorème~\ref{dmfs}, la matrice $B$ est $E_p$-équivalente à $$G=\frac{\gamma_{1}+m}{(z+m)(z-\gamma_{1})}F_{1}+\cdots+\frac{\gamma_{r-1}+m}{(z+m)(z-\gamma_{r-1})}F_{r-1}-\frac{1}{z+m}F_{r},$$ o\`u $F_{j}\in {\mathcal M}_n(\mathbb{C}_{p})$ pour $j\in\{1,\ldots,r\}$ telles que pour $j\in\{1,\ldots,r-1\}$, $F_j$ est semblable à $[(z-\gamma_j)R_j](\gamma_{j})$, $F_r$ est semblable à $[-zR_r](\infty)$ et $\sum F_i$ est une matrice diagonale à coefficients dans $\mathbb{Z}$. De plus, il suit que pour $j\in\{1,\ldots,r-1\}$, $[(z-\gamma_j)R_j](\gamma_{j})=p^hC_j$ et de \eqref{binfity} que $[-zR_r](\infty)=-p^hC_r$. Raison pour laquelle $F_j$ est semblable à $p^hC_j$ pour $j\in\{1,\ldots,r-1\}$ et $F_r$ semblable à $-p^hC_r$.

\end{proof}

\begin{proof}[D\'emonstration du point \rm{(ii)}]
	Rappelons que $\kappa:\mathbb{C}_{p}\rightarrow\mathbb{C}$ est un isomorphisme de corps.
	En appliquant l'isomorphisme $\kappa$, il vient
	\begin{eqnarray*}
		G^{\kappa}&=&\frac{\kappa(\gamma_{1})+m}{(z+m)(z-\kappa(\gamma_{1}))}F^{\kappa}_{1}+\cdots+\frac{\kappa(\gamma_{r-1})+m}{(z+m)(z-\kappa(\gamma_{r-1}))}F^{\kappa}_{r-1}-\frac{1}{z+m}F^{\kappa}_{r}\\
		&=&-\frac{1}{z+m}\left(\sum_{j=1}^{r} F_{j}^{\kappa}\right)+\sum_{j=1}^{r-1}\frac{1}{z-\kappa(\gamma_{j})}F_{j}^{\kappa}.
	\end{eqnarray*}  
	
	Notons que la matrice  $G^{\kappa}$ est \`a coefficients dans $\mathbb{C}(z)$ et comme $F_{j}^{\kappa}\in {\mathcal M}_n(\mathbb{C})$, la matrice $G^{\kappa}$ est fuchsienne, avec $\gamma_{0}=-m,\,\kappa(\gamma_{1}),\ldots,\kappa(\gamma_{r-1})$ et l'infini comme singularit\'es.
	
	\smallskip
	
	Montrons que la matrice de monodromie de $G^{\kappa}$ en $\gamma_{0}$ est l'identit\'e. En effet, comme $\sum_{j=0}^{r}F_{j}$ est une matrice diagonale \`a coefficients dans $\mathbb{Z}$, la matrice $[(z+m)G^{\kappa}](-m)=-\sum_{j=1}^{r} F_{j}^{\kappa}$ est \'egalement diagonale et \`a coefficients entiers,  puisque tout homomorphisme de corps de caract\'eristique z\'ero fixe $\mathbb{Z}$. Ainsi, les exposants de $G^{\kappa}$ en $\gamma_{0}$ sont des entiers. Donc, d'après le lemme~\ref{monodromie}, il existe une matrice $C_0\in {\mathcal M}_{n}(\mathbb{C})$ telle que la matrice de monodromie locale de $G^{\kappa}$ en $\gamma_{0}$ est conjugu\'ee \`a $\exp(2\pi i C_0)$ et telle que l'ensemble des valeurs propres de $C_0$ est r\'eduit \`a un \'el\'ement, $\{s\}$, où $s$ est entier. Écrivons $C_0=P(D+N)P^{-1}$, où $P\in {\rm GL}_n(\mathbb{C})$, $D$ est diagonale et $N$ est  nilpotente. Le théorème 8.6 de \cite[Chap. III]{Dworkgfunciones} nous assure que $TP(X^{D}X^{N})P^{-1}$ est une solution de $G^{\kappa}$, où $T\in {\rm GL}_n(\mathbb{C}((z+m)))$. Mais, d'après la remarque~\ref{apparant}, il suit que $G^{\kappa}$ a une basse de solutions à coefficients dans $\mathbb{C}((z+m))$. Donc, $X^{N}=Id_n$. Ainsi, $C_0=PDP^{-1}$, $D=sId_n$ et $\exp(2\pi iC_0)$ est conjuguée à $Id_n$. Par conséquent, la matrice de monodromie locale de $G^{\kappa}$ en $\gamma_{0}$ est conjuguée à $Id_n$. Cela montre  que la matrice de monodromie locale de $G^{\kappa}$ en $\gamma_{0}$ est l'identité.

\vspace*{0.2cm}

	Notons que la matrice $(z-\kappa(\gamma_{j}))G^{\kappa}$ n'a pas de pôle en $\kappa(\gamma_{j})$ et que $[(z-\kappa(\gamma_{j}))G^{\kappa}](\gamma_{j})=F^{\kappa}_j$. 
 Montrons que les valeurs propres de $F_{j}^{\kappa}$ sont les valeurs propres de $C_j$ multipli\'ees par $p^{h}$.
	En effet, comme remarqu\'e pr\'ec\'edemment, $F_{j}$ est semblable \`a $p^{h}C_j$. \'Ecrivons $C_j=UMU^{-1}$, o\`u $M$ est la forme de Jordan de $C_j$ et $U\in {\rm GL}_{n}(\mathbb{C}_{p})$. On note que $M\in {\mathcal M}_{n}(\mathbb{Q})$. En effet,  d'apr\`es le lemme~\ref{exp}, les valeurs propres de $A_j(\gamma_{j})$ sont les exposants de $L$ en $\gamma_{j}$ qui sont des nombres rationnels et donc  la remarque~\ref{a} implique que les valeurs propres de $C_j$ sont des nombres rationnels. Ainsi, $F_{j}$ est semblable \`a $p^{h}M$ et, comme $p^{h}M\in {\mathcal M}_{n}(\mathbb{Q})$, on obtient que $F_{j}^{\kappa}$ est semblable \`a $p^{h}M$. Notre affirmation en d\'ecoule.
	 
		La remarque~\ref{a} entra\^ine que les valeurs propres de $p^{h}C_{j}$ sont de la forme $p^{h}\beta + s$, o\`u $s$ est un entier et $\beta$ une valeur propre de $A_{j}(\gamma_{j})$. Montrons que deux valeurs propres distinctes de $p^{h}C_{j}$  ne diff\`erent pas d'un entier. En effet, supposons que $\lambda'-\beta'\in\mathbb{Z}$, où $\lambda'$ et $\beta'$ sont deux valeurs propres de $p^hC_j$. Alors $\lambda'=p^h\lambda$ et $\beta'=p^h\beta$, où $\lambda$ et $\beta$ sont deux valeurs propres de $C_j$. Alors, $p^h(\lambda-\beta)\in\mathbb{Z}$. D'après le premier pas, il existe des entiers $m_{\lambda}$ et  $m_{\beta}$, et $\lambda_1$ et 
		$\beta_1$ des exposants de $L$ en $\gamma_{j}$, tels que $\lambda=\lambda_1+m_{\lambda}$ 
		et $\beta=\beta_1+m_{\beta}$. Alors $p^h(\lambda_1-\beta_{1})\in\mathbb{Z}$. Il découle de \eqref{expo} que $p^h\lambda_1\equiv\lambda_1\mod\mathbb{Z}$ et $p^h\beta_1\equiv\beta_1\mod\mathbb{Z}$. Ainsi, $p^h(\lambda_1-\beta_1)\equiv\lambda_1-\beta_1\mod\mathbb{Z}$. Mais comme $p^h(\lambda_1-\beta_{1})\in\mathbb{Z}$, on obtient que  $\lambda_1-\beta_1\in\mathbb{Z}$. On en d\'eduit que $\lambda-\beta=(\lambda_1-\beta_1)-(m_{\beta}-m_{\lambda})\in\mathbb{Z}$. Par conséquent, d'après le premier pas, $\lambda=\beta$ et ainsi $\lambda'=\beta'$. Alors, on a : 

\smallskip

	\textbf{(a')} deux valeurs propres de $p^{h}C_{j}$ distinctes ne diff\`erent pas d'un entier.
	
\smallskip

	Ainsi, deux valeurs propres différentes de $F^{\kappa}_j$ ne diff\`erent pas d'un entier. D'après la proposition 3.12, le lemme 3.42 et le théorème 5.1 de \cite{Singer}, la matrice de monodromie locale de $G^{\kappa}$ en $\kappa(\gamma_{j})$, $M(G^{\kappa},\kappa(\gamma_{j}))$, est conjuguée à $\exp(2\pi iF^{\kappa}_j)$. Comme $F^{\kappa}_j$ est semblable à $p^hC_j$, on obtient que les matrices 
	$\exp(2\pi iF^{\kappa}_j)$ et $\exp(2\pi ip^hC_j)$ sont conjuguées. 
	Cela donne que $M(G^{\kappa},\kappa(\gamma_{j}))$ est conjugu\'ee \`a $\exp(2\pi ip^{h}C_{j})$. D'apr\`es le deuxi\`eme pas, $\exp(2\pi i p^{h}C_{j})$ et $\exp(2\pi i C_{j})$ sont conjugu\'ees et d'après le premier pas, les matrices $\exp(2\pi i p^{h}C_{j})$ et $M(A,\gamma_j)$ sont conjuguées.  Donc $M(G^{\kappa},\kappa(\gamma_{j}))$ est conjugu\'ee \`a $M(A,\gamma_{j})$.
	
	Notons que $\{\kappa(\gamma_{1}),\ldots,\kappa(\gamma_{r-1})\}=\{\gamma_{1},\ldots,\gamma_{r-1}\}$. 
	Comme le groupe de monodromie de $A$ est rigide, les groupes de monodromie de $A$ et $G^{\kappa}$ sont conjugu\'es.
\end{proof}

\begin{proof}[D\'emonstration du point \rm{(iii)}]
	D'apr\`es ce qui pr\'ec\`ede, il existe $U\in {\rm GL}_n(\mathbb{C})$ telle que $M(A,\gamma_{j})=UM(G^{\kappa},\kappa(\gamma_{j}))U^{-1}$ pour $1\leq j\leq r$. D'apr\`es la proposition~\ref{1-1}, il existe $H_{1}\in {\rm GL}_n(\mathbb{C}(z))$ telle que \[\frac{d}{dz}H_{1}=AH_{1}-H_{1}G^{\kappa}.\]
	On pose $H=H_{1}^{\kappa^{-1}}$, ainsi $H\in {\rm GL}_n(\mathbb{C}_{p}(z))\subset {\rm GL}_n(E_{p})$ et comme $\frac{d}{dz}$ et $\kappa^{-1}$ commutent, on a \[\frac{d}{dz}H=A^{\kappa^{-1}}H-HG.\]
	Remarquons que $A^{\kappa^{-1}}=A$, puisque $A$ est une matrice \`a coefficients dans $\mathbb{Q}(z)$. Par cons\'equent, $A$ et $G$ sont $E_{p}$-\'equivalentes. D'apr\`es le point \textbf{(ii)}, $G$ est $E_{p}$-\'equivalentes \`a $B$, alors par transitivit\'e on obtient que $A$ et $B$ sont $E_{p}$-\'equivalentes.
\end{proof}

\begin{remarque}\label{h'}
	La preuve pr\'ec\'edente montre en r\'ealit\'e que pour tout entier $h'\geq 1$ tel que pour tout $p\in\mathcal{S}$ les équations \eqref{h} et \eqref{expo} sont vérifiées, alors $L$ a une structure de Frobenius forte de période $h'$.  
\end{remarque}

%%%%%%%%%%%%%%%%%%%%%%%%%%%%%%%%%%%%%%%%%
\section{L'opérateur hypergéométrique généralisé.}\label{operatuerhyp}

  Dans cette partie, nous \'etudions les structures de Frobenius forte des op\'erateurs hyperg\'eom\'etriques 
  g\'en\'eralis\'es. 
  Nous d\'emontrons notamment  le théorème~\ref{alghyp}.
  
  Consid\'erons l'op\'erateur diff\'erentiel hyperg\'eom\'etrique d\'efini par 
  \begin{equation}\label{hype}
  {\mathcal H}(\underline{\alpha},\underline{\beta}):-z(\delta+\alpha_{1})\cdots(\delta+\alpha_{n})y
  +(\delta+\beta_{1}-1)\cdots(\delta+\beta_{n}-1),
  \end{equation} 
  o\`u $\alpha_{1},\ldots,\alpha_{n},\beta_{1},\ldots,\beta_{n}$ sont des nombres rationnels tels que $\alpha_{i}-\beta_{j}\notin\mathbb{Z}$ pour tout $i,j\in\{1,\ldots,n\}$. 
  Cet op\'erateur est fuchsienne et a $1$, $0$ et l'infini  comme seules singularit\'es. 
  Les exposants \`a l'infini sont $\alpha_{1},\ldots,\alpha_{n}$, les exposants en $0$ sont $1-\beta_{1},\ldots,1-\beta_{n}$, 
  et les exposants en $1$ sont $0,1,\ldots,n-2, -1+\sum(\beta_{i}-\alpha_{i})$.
  Nous rappelons la d\'efinition suivante (voir \cite{Beukers}).
  \begin{defi}
  	Supposons que $a_{1},\ldots,a_{n},b_{1},\ldots,b_{n}\in\mathbb{C}^{*}$ v\'erifient $a_{i}\neq b_{j}$ pour tout $i,j\in\{1,\ldots,n\}$. 
  	Un groupe hyperg\'eom\'etrique associ\'e aux param\`etres $a_{1},\ldots,a_{n},b_{1},\ldots,b_{n}$ est un sous-groupe de ${\rm GL}_n(\mathbb{C})$ engendr\'e par des matrices $h_{0},h_{1},h_{\infty}\in {\rm GL}_n(\mathbb{C})$ telles que  
  	$h_{1}$ est une r\'eflexion et 
  	\begin{align*}
  	h_{\infty}h_{1}h_{0}&=Id, \\
  	det(z-h_{\infty})&=\prod(z-a_{i}), \\
  	det(z-h_{0}^{-1})&=\prod(z-b_{j}).
  	\end{align*} 	
  \end{defi}
  
  D'apr\`es un r\'esultat de Levelt (voir le  th\'eor\`eme 3.5 de \cite{Beukers}),  un 
  groupe hyperg\'eometrique  tel que $a_{i}\neq b_{j}$ pour tout $i,j\in\{1,\ldots,n\}$ est rigide. 
  D'autre part, il est connu que le groupe de monodromie de l'op\'erateur hyperg\'eom\'etrique 
  ${\mathcal H}(\underline{\alpha},\underline{\beta})$ est un groupe hyperg\'eom\'etrique associ\'e aux param\`etres 
  $a_{i}=exp(2\pi i\alpha_{i})$ et $b_{i}=exp(2\pi i\beta_{i})$ (voir \cite{Beukers}). Ainsi, le groupe de monodromie de \eqref{hype} est rigide. 
  Donc, gr\^ace au th\'eor\`eme~\ref{rig},  $\mathcal{H}(\underline{\alpha},\underline{\beta})$ a une structure de Frobenius forte pour presque tout nombre premier $p$.
En appliquant le théorème~\ref{rig} on a le théorème suivant.

\begin{theo}\label{hyp}
	Soit $\mathcal{S}$  l'ensemble des nombres premiers $p$ tels que $|\alpha_{i}|_{p},|\beta_{j}|_{p}\leq1$
	pour tout $i,j\in\{1,\ldots,n\}$.
	Alors, pour tout $p\in\mathcal{S}$, l'op\'erateur hyperg\'eom\'etrique ${\mathcal H}(\underline{\alpha},\underline{\beta})$ poss\`ede  une structure de Frobenius forte de p\'eriode $h=\varphi(d_{\alpha,\beta})$, o\`u $\varphi$ est l'indicatrice d'Euler et $d_{\alpha,\beta}$ est le plus petit commun multiple 
	des d\'enominateurs de $\alpha_{1},\ldots,\alpha_{n}$, $\beta_{1},\ldots,\beta_{n}$.
\end{theo}

\begin{proof}
	 En d\'eveloppant l'\'equation $\eqref{hype}$, on obtient $$(1-z)\delta^{n}+[S_{n,1}(\underline{\beta}-1)-zS_{n,1}(\underline{\alpha})]\delta^{n-1}+\cdots+S_{n,n}(\underline{\beta}-1)-zS_{n,n}(\underline{\alpha}),$$ o\`u $\underline{\beta}-1=(\beta_{1}-1,\ldots,\beta_{n}-1)$ et $S_{n,k}=\sum_{1\leq i_{1}<\cdots<i_{k}\leq n}X_{1}\cdots X_{i_{k}}.$
	
	En \'ecrivant cette \'equation en fonction de l'op\'erateur $\frac{d}{dz}$, il vient  $$L_{\underline{\alpha},\underline{\beta}}:=a_0(z)\frac{d}{dz^{n}}y+a_{1}(z)\frac{d}{dz^{n-1}}y+\cdots+a_{n}(z).$$  Soit $A$ la matrice compagnon de ce nouvel op\'erateur. D'apr\`es l'\'equation \eqref{q}, on a 
	\begin{multline*}
	\left(S_{n,n}(\underline{\beta}-1)-zS_{n,n}(\underline{\alpha}),\ldots,S_{n,1}(\underline{\beta}-1)-zS_{n,1}(\underline{\alpha}),1-z\right)G_{n+1}^{-1}\\
	=(a_{n}(z),\ldots,a_{1}(z),a_0(z)).
	\end{multline*} 
	Ainsi $a_0(z)=(1-z)z^n$ et le discriminant de $a_0(z)$ est 1. 
	Dans ce cas, on obtient que $\mathfrak{A}_1=\{1\}$, alors que l'ensemble $\mathfrak{A}_2$ est donné par les dénominateurs des $\alpha_i$, des $1-\beta_{j}$ et le dénominateur de $-1+\sum_{i=1}^{n}(\beta_i-\alpha_i)$. Finalement $\mathfrak{A}_3=\left\{\frac{a_i(z)}{a_0(z)}\right\}_{1\leq i\leq n}$. Comme $p\in\mathcal{S}$, on obtient que pour tout élément de $\mathfrak{A}_1\cup\mathfrak{A}_2$ est de norme $p$-adique est égale à 1. 
	A pr\'esent, montrons que, pour tout $p\in\mathcal{S}$, $\left|\frac{ a_i(z)}{a_0(z)}\right|_{\mathcal{G}}\leq 1$. 
	En effet, si $p\in\mathcal{S}$, alors  
	$$\left|\left|\left(S_{n,n}(\underline{\beta}-1)-zS_{n,n}(\underline{\alpha}),\ldots,S_{n,1}(\underline{\beta}-1)-zS_{n,1}(\underline{\alpha}),1-z\right)\right|\right|_{\mathcal G,p}\leq1.
	$$ 
	On v\'erifie ais\'ement que, pour tout nombre premier $p$, $||G_{n+1}^{-1}||_{\mathcal G,p}\leq1$. En particulier, il en est de m\^eme pour $p\in\mathcal{S}$. Ainsi, la norme de Gauss du vecteur $(a_{n}(z),\ldots,a_{1}(z),a_0(z))$ est inf\'erieure ou \'egale \`a 1 pour tout $p\in\mathcal{S}$. D'autre part, pour tout $p\in\mathcal{S}$, $\vert a_0(z)\vert_{\mathcal{G}}=1$. On obtient donc que  $\left|\frac{ a_i(z)}{a_0(z)}\right|_{\mathcal{G}}\leq 1$, pour tout $p\in\mathcal{S}$.
	
	Finalement, comme $|\alpha_{i}|_{p},|\beta_{j}|_{p}\leq1$ 
	pour tout $i,j\in\{1,\ldots,n\}$, alors $p$ ne divise pas $d_{\alpha,\beta}$. Ainsi,  pour tout $i,j\in\{1,\ldots,n\}$, on a 
	\[p^{\varphi(d_{\alpha,\beta})}\equiv1\mod{d_{\alpha,\beta}}\]
	\[p^{\varphi(d_{\alpha,\beta})}\equiv1\mod{d_{\alpha,\beta}}\] 
	et donc 
	\[p^{\varphi(d_{\alpha,\beta})}\alpha_{i}\equiv\alpha_{i}\mod{\mathbb{Z}}\]
	\[p^{\varphi(d_{\alpha,\beta})}\beta_{j}\equiv\beta_{j}\mod{\mathbb{Z}}.\]	
	Il suit que $p^{\varphi(d_{\alpha,\beta})}\alpha_{i}\equiv\alpha_{i}\mod{\mathbb{Z}}$, $p^{\varphi(d_{\alpha,\beta})}(1-\beta_{j})\equiv1-\beta_{j}\mod{\mathbb{Z}}$ et $p^{\varphi(d_{\alpha,\beta})}(-1+\sum_{i=1}^{n}(\beta_i-\alpha_i))\equiv -1+\sum_{i=1}^{n}(\beta_i-\alpha_i)\mod\mathbb{Z}$. De plus, ici les zéros de $a_0(z)$ sont $\gamma_1=0$ et $\gamma_2=1$, d'où $\vert\gamma_i^{p^{\varphi(d_{\alpha,\beta})}}-\gamma_i\vert=0<\vert\pi_p\vert$ pour $i\in\{1,2\}$. Donc, d'après la remarque~\ref{h'}, pour tout $p\in\mathcal{S}$ l'opérateur différentiel $L_{\underline{\alpha},\underline{\beta}}$ poss\`ede une structure de Frobenius forte de période $\varphi(d_{\alpha,\beta})$. 
	Ainsi,  pour tout $p\in\mathcal{S}$, l'opérateur différentiel $\mathcal{H}(\underline{\alpha},\underline{\beta})$ a une structure de Frobenius forte de période $\varphi(d_{\alpha,\beta})$.
\end{proof}

Nous sommes maintenant en mesure de démontrer le théorème~\ref{alghyp}.

\begin{proof}[D\'emonstration du th\'eor\`eme~\ref{alghyp}]
	Soit $p\in\mathcal S$.  Alors $p$ ne divise pas $d_{\alpha,\beta}$, 
	de sorte que $|\alpha_i|_p,|\beta_j|_p\leq1$ pour $i,j\in\{1,\ldots,n\}.$ D'apr\`es le théorème~\ref{hyp}, on obtient  que
	\[-z(\delta+\alpha_{1})\cdots(\delta+\alpha_{n})+(\delta+\beta_{1}-1)\cdots(\delta+\beta_{n}-1)\]
	poss\`ede une structure de Frobenius forte pour $p$ de p\'eriode $h$. Par hypoth\`ese, on a \'egalement $_{n}F_{n-1}(\underline{\alpha},\underline{\beta},z)\in\mathbb{Z}_{(p)}[[z]]$. 
	Le th\'eor\`eme~\ref{algebrique} implique donc  que la r\'eduction de $_{n}F_{n-1}(\underline{\alpha},\underline{\beta},z)$ modulo $p$ est alg\'ebrique sur $\mathbb F_p(z)$ de degr\'e major\'e par $p^{n^{2}h}$. 
	Enfin, la d\'emonstration du th\'eor\`eme~\ref{hyp} montre que l'on peut prendre $h= \phi(d_{\alpha,\beta})$.
\end{proof}

Pur conclure, nous appliquons le théorème~\ref{alghyp} aux deux séries hypergéométriques $f_2(z)=~_3F_{2}\left(\frac{1}{9},\frac{4}{9},\frac{5}{9};\frac{1}{3},1,1,z\right)$ et $f_3(z)=~_2F_1\left(\frac{1}{3},\frac{1}{2};\frac{5}{12},1,z\right)$. 

\`A ce jour, on ne sait  toujours pas si $f_2(z)$ est une diagonale de fraction rationnelle et on ne peut donc pas 
appliquer les résultats de  \cite{Bordiagonal}. D'autre part, les résultats de \cite{Borisgfonctpu} ne s'appliquent pas non plus à la série $f_2(z)$ (voir \cite[Example 8.6]{Borisgfonctpu}). 
Notons que  $f_2(z)$ est globalement bornée puisqu'on v\'erifie ais\'ement que  $f_2(27^2z)\in\mathbb{Z}[[z]]$.  
Ainsi,  $f_2(z)\in\mathbb{Z}_{p}[[z]]$ pour tout $p\not=3$. 
Notons que dans ce cas, on a $d_{\alpha,\beta}=9$, où $\alpha=\left(\frac{1}{9},\frac{4}{9},\frac{5}{9}\right)$ et $\beta=\left(\frac{1}{3},1,1\right)$. Le théorème~\ref{alghyp} nous garanti donc que, pour tout nombre premier $p\neq3$, $f_{2\mid p}$ est algébrique sur $\mathbb{F}_p(z)$ de degr\'e majoré par $p^{54}$.

On sait que la  série $f_3(z)$ n'est pas la diagonale d'une fraction rationnelle car elle n'est pas globalement 
bornée (voir proposition 1 de \cite{gilleborne}). Ainsi, on ne peut pas non plus lui appliquer les r\'esultats de 
\cite{Bordiagonal}. D'autre part,  les  résultats de \cite{Borisgfonctpu} ne peuvent pas lui \^etre appliqu\'es non plus (voir 
la  section 8 de \cite{Borisgfonctpu}).   Par contre, le théorème~\ref{alghyp} s'applique.  En effet, soit 
$\mathcal{\mathcal{S}}$ l'ensemble des nombres premiers tels que $f_3(z)\in\mathbb{Z}_{p}[[z]]$. Cet ensemble est infini car si $p$ est un nombre premier congruent à 1 modulo 12, alors $p\in\mathcal{S}$. 
Dans ce cas, on v\'erifie que  $d_{\alpha,\beta}=12$, où $\alpha=\left(\frac{1}{3},\frac{1}{2}\right)$ et $\beta=\left(\frac{5}{12},1,\right)$. D'après le théorème~\ref{alghyp}, si $p>3$ appartient \`a $\mathcal{S}$, 
alors $f_{3\mid p}$ est algébrique sur $\mathbb{F}_p(z)$ de degré majoré par $p^{16}$.

\bigskip

\renewcommand{\abstractname}{Remerciements}
\begin{abstract}
L'auteur tient \`a remercier chaleureusement Gilles Christol pour ses commentaires sur une version pr\'eliminaire de cet article. Il remercie \'egalement l'arbitre pour sa lecture attentive ainsi que ses diff\'erentes remarques. 
\end{abstract}


\begin{thebibliography}{99}
	
	
	\bibitem{ADconj}
	{\sc B. Adamczewski et E. Delaygue}, communication personnelle 2019. 
	
	
	\bibitem{Bordiagonal}
	{\sc B. Adamczewski, J. P. Bell,}
	{\it Diagonalization and rationalization of algebraic Laurent series.}
	{Ann. Sci. \'Ec. Norm. Sup\'er \textbf{46} (2013), 963--1004.}
	
	\bibitem{Borisgfonctpu}
	{\sc B. Adamczewski, J. P. Bell, E. Delaygue,}
	{\it Algebraic independence of G-functions and Congruences "\`a la lucas",}
	Ann. Sci. \'Ec. Norm. Sup\'er \textbf{52} (2019), 515--559.
	
	\bibitem{Borisgfonct}
	{\sc B. Adamczewski, J. P. Bell, E. Delaygue,}
	{\it Algebraic Independence of G-functions ans Congruences "\`a la lucas",}
	prepint arXiv1603.04187, 50 pp.
	
	\bibitem{andre}
	{\sc Y. Andr\'e,}
	{G-functions and geometry,}	
	Aspects of Mathematics, E13. Friedr. Vieweg \& Sohn, Braunschweig, 1989.
	
	\bibitem {Beukers}
	{\sc F. Beukers, G. Heckman,}
	{\it Monodromy for the hypergeometric fonction $_{n}F_{n-1}$,}
	Invent. Math \textbf{95} (1989), 325--354. 
	
	
	\bibitem {Gillesfacteurs}
	{\sc G. Christol,}
	{\it Décomposition des matrices en facteurs singuliers. Applications aux équations différentielles,}
	Study Group on Ultrametric Analysis \textbf{7-8} (1979–1981), Exp. No 5, 17 pp.
	
	\bibitem {Gillesmoduldiff}
	{\sc G. Christol,}
	{Modules diff\'erentielles et \'equations diff\'erentielles $p$-adiques,}
	Queen's Papers in Pure and Applied Mathematics \textbf{66}, Queen's University, Kingston, 1983.
	
	
	\bibitem {Gillesalgebriques}
	{\sc G. Christol,}
	{\it Fonctions et \'el\'ements alg\'ebriques,}
	Pacific J. Math \textbf{125} (1986), 1-37.


	
	\bibitem {gilleborne}
	{\sc G. Christol,}
	{\it Fonctions hyperg\'eom\'etriques born\'ees,}
	Study Group on Ultrametric Analysis \textbf{14} (1986-1987), Exp. No 8, 16 pp.
	
	
	\bibitem {crew}
	{\sc R. Crew,}
	{\it Rigidity and Frobenius structure,}
	 Doc. Math \textbf{22} (2017), 287–296.
	
	
	\bibitem {delignehodge}
	{\sc P. Deligne,}
	{\it Théorie de Hodge. II,}
	 Inst. Hautes Études Sci. Publ. Math \textbf{40} (1971), 5–57.
	
	
	\bibitem {deligne}
	{\sc P. Deligne,}
	{\it Intégration sur un cycle évanescent,}
	Invent. Math \textbf{76} (1983), 129--143.

	
	\bibitem {DworksFf}
	{\sc B.M Dwork,}
	{\it On $p$-adic differential equations I. The frobenius strcuture of differential equations,}
	Bull. Soc. Math. France \textbf{39-40} (1974), 27--37.
	
	
	
	\bibitem {Dworklectures}
	{\sc B.M.\ Dwork,}
	{Lectures on $p$-adic differential equations,}
	Grundlehren der Math. Wissens-chaften \textbf{253}, Spring-Verlag, New York, 1982.

	
	\bibitem{Dworkgfunciones}
	{\sc 	B. Dwork, G. Gerotto, And F. Sullivan,}
	{An introduction to G-functions,}
	Annals of Mathematics Studies \textbf{133}, Princeton University Press, 1994.
	
	\bibitem{esnault}
	{\sc H. Esnault, M. Groechenig,}
	{\it Rigid connections and F-isocrystals,}
	prepint arXiv:1707.00752, 44 pp.
	
	\bibitem{furstenberg}
	{\sc H. Furstenberg,}
	{\it Algebraic functions over finite fields,}
	J. Algebra. \textbf{7} (1967) 271--277.
	
	
	\bibitem{localsystems}
	{\sc N. Katz,}
	{\it Rigid Local Systems,}
	Annals of Math. Studies \textbf{139}, Princeton Univ. Pres, 1996.
	
	\bibitem{Kedlaya}
	{\sc K. Kedlaya,}
	{\it $p$-adic differential equations,}
	Cambridge studies in advanced mathematics \textbf{125}, Cambridge University Press, 2010. 
	
	
	
	\bibitem{Singer}
	{\sc M. van der Put, And M. F. Singer,}
	{Galois theory of linear differential equations,}
	Grundlehren der Mathematischen Wissenschaften \textbf{328}, Springer-Verlag, Berlin, 2003.
	
	
	\bibitem {Salinier}
	{\sc A. Salinier,},
	{\it Structure de Frobenius forte de l'\'equation diff\'erentielle hyperg\'eom\'etrique,}
	C. R. Acad. Sci. Paris Sér. I Math \textbf{305} (1987), 393-396.
	
	\bibitem {Siegel}
	{\sc C. Siegel,}
	{\it Ûber einige Anwendungen diohanlischer Approximationen,}
	Abhandlungen Akad. Berlin 1929..
	

	
	
	
\end{thebibliography}
\end{document}